\documentclass{amsart}
\usepackage{amsmath}
\usepackage{paralist}
\usepackage{url}
\usepackage{mathrsfs}
\usepackage{color}
\usepackage{graphicx}
\usepackage{caption}
\usepackage{subcaption}
\usepackage{multirow}
\usepackage{mathtools}
\usepackage{amssymb}
\usepackage{booktabs}
\usepackage{makecell}
\usepackage{pgfplots}
\pgfplotsset{compat=newest}
\usepackage{mathrsfs}
\usetikzlibrary{arrows}
\usepackage{xspace}
\usepackage{comment}
\usepackage{arydshln}
\usepackage{caption}

\newcommand{\re}{\mathbb{R}}

\newcommand{\N}{\mathbb{N}}

\newcommand{\mc}[1]{\mathcal{#1}}

\newcommand{\st}{\mathit{s.t.}}

\newcommand{\qmod}[1]{\mbox{QM}[#1]}

\newcommand{\RN}[1]{%
  \textup{\uppercase\expandafter{\romannumeral#1}}%
}

\newcommand{\cone}[1]{\mathit{cone}(#1)}

\newcommand{\bdes}{\begin{description}}
	\newcommand{\edes}{\end{description}}
\newcommand{\bal}{\begin{align}}
\newcommand{\eal}{\end{align}}
\newcommand{\bnum}{\begin{enumerate}}
	\newcommand{\enum}{\end{enumerate}}
\newcommand{\bit}{\begin{itemize}}
	\newcommand{\eit}{\end{itemize}}
\newcommand{\bea}{\begin{eqnarray}}
\newcommand{\eea}{\end{eqnarray}}
\newcommand{\be}{\begin{equation}}
\newcommand{\ee}{\end{equation}}
\newcommand{\baray}{\begin{array}}
	\newcommand{\earay}{\end{array}}
\newcommand{\bsry}{\begin{subarray}}
	\newcommand{\esry}{\end{subarray}}
\newcommand{\bca}{\begin{cases}}
	\newcommand{\eca}{\end{cases}}
\newcommand{\bcen}{\begin{center}}
	\newcommand{\ecen}{\end{center}}
\newcommand{\bbm}{\begin{bmatrix}}
	\newcommand{\ebm}{\end{bmatrix}}
\newcommand{\bmx}{\begin{matrix}}
	\newcommand{\emx}{\end{matrix}}
\newcommand{\bpm}{\begin{bmatrix}}
	\newcommand{\epm}{\end{bmatrix}}
\newcommand{\btab}{\begin{tabular}}
	\newcommand{\etab}{\end{tabular}}

\newtheorem{theorem}{Theorem}[section]

\newtheorem{lemma}[theorem]{Lemma}

\theoremstyle{definition}
\newtheorem{example}[theorem]{Example}

\newtheorem{alg}[theorem]{Algorithm}

\newtheorem{assumption}[theorem]{Assumption}

\numberwithin{equation}{section}

\pgfplotsset{every axis/.append style={tick label style={/pgf/number format/fixed},font=\scriptsize,ylabel near ticks,xlabel near ticks,grid=major}}

\pgfmathdeclarefunction{sumexp}{3}{%
  \begingroup%
  \pgfkeys{/pgf/fpu}
  \pgfmathsetmacro{\myx}{#1}%
  \pgfmathtruncatemacro{\myxmin}{#2}%
  \pgfmathtruncatemacro{\myxmax}{#3}%
  \pgfmathsetmacro{\mysum}{0}%
  \pgfplotsforeachungrouped\XX in {\myxmin,...,\myxmax}%
    {\pgfmathsetmacro{\mysum}{\mysum+exp(\XX)}}%
  \pgfmathparse{\mysum+exp(#1)}%
  \pgfmathfloattofixed\pgfmathresult
  \pgfmathsmuggle\pgfmathresult\endgroup%
}%

\begin{document}

\title[DRSR model with moment ambiguity]
{Distributionally robust shortfall risk portfolio model with moment ambiguity sets}

\author[Yi Yang]{Yi Yang}
\author[Liu Yang]{Liu Yang}
\author[Suhan Zhong]{Suhan~Zhong}

\address{Yi Yang, School of Mathematics and Computational Sciences, 
	Xiangtan University, Xiangtan, Hunan, China, 411105; 
	School of Science, Hunan City University, Yiyang, Hunan, China, 413000.}

\address{Liu Yang, School of Mathematics and Computational Sciences, 
	Xiangtan University, Xiangtan, Hunan, China, 411105; 
	Hunan Research Center of the Basic Discipline Fundamental Algorithmic Theory and Novel Computational Methods, Xiangtan University, 
	Xiangtan, Hunan, China, 411105}

\address{Suhan Zhong, School of Mathematical Sciences,
Shanghai Jiao Tong University,
Shanghai, China, 200240.}

\begin{abstract}
This paper employs shortfall risk to measure portfolio risk.
Assume stock returns follow polynomial relations with their influence factors.
We propose a moment-based distributionally robust optimization (DRO) shortfall 
risk portfolio model.
For piecewise linear loss functions, we show that this DRO model can be 
transformed into a tractable linear conic optimization problem with 
nonnegative polynomial cones. 
A Moment-SOS relaxation algorithm is proposed to solve the transformed problem.
Its finite and asymptotic convergence properties are studied.
For optimizers computed from our algorithm, 
we give convenient conditions verify their global optimality for the 
original DRO problem. 
Numerical experiments involving real stock market data are given to show 
the efficiency of our approach.	
\end{abstract}

\maketitle

\section{Introduction}

Portfolio theory remains a cornerstone of financial mathematics.
Extensive research has been conducted to find appropriate risk measurements
that balance theoretical rigor and practical applicability.
Markowitz's mean-variance model \cite{Markowitz1952} 
assumes normally distributed returns and symmetrically penalizes 
both upside and downside deviations.
Downside risk metrics like lower semi-variance were developed afterwards,
but they do not adequately account for loss magnitude.
In 1996, Jorion \cite{Jorion96} introduced Value-at-Risk (VaR) to quantify 
the largest expected loss at a given confidence level.
Based on that, Rockafellar and Uryasev \cite{RockUry00} subsequently 
introduced Conditional Value-at-Risk (CVaR) as a coherent risk measure.
It has distinct advantages compared to VaR and has been widely used in
portfolio management applications. 
However, CVaR still has computational limitations
with respect to consistency under randomization 
and heavy-tailed losses; see as in \cite{GuoXu19}.
On the other hand, the utility-based shortfall risk measure proposed by 
F\"{o}llmer and Schied \cite{FollmerSch02} is known for its effectiveness
in quantifying extreme risks associated with heavy-tailed losses. 
In this paper, we choose shortfall risk measure to study portfolio
optimization problems.

Consider a financial position modeled as a random variable 
$Z: (\Omega, \mathscr{F}, \mu) \rightarrow \mathbb{R}$,
where $(\Omega, \mathscr{F}, \mu)$ is the probability space.
The shortfall risk of $Z$ is defined as the optimal value of 
the following optimization problem:
\begin{equation}
	\label{eq:shortfall risk}
	\left\{\begin{array}{cl}
		\min \limits_{t\in \mathbb{R}} & t\\
		\st & \mathbb{E}_{\mu}[l(-Z-t)]\leq \lambda.
	\end{array}
	\right.
\end{equation}
Here $\mathbb{E}_{\mu}[\cdot]$ is the expectation taken under the measure $\mu$,
$l: \mathbb{R} \rightarrow \mathbb{R}$ is a convex, increasing and non-constant
loss function, and $\lambda$ is a prescribed confidence level. 
In applications, loss functions are often selected as piecewise linear functions, e.g.,
\begin{equation}\label{eq:loss}
	l(Z) \coloneqq \sup\limits_{1\le j\le m} \{a_jZ+b_j\},
\end{equation}
where $a_j,b_j$ are given real scalars.
We refer to \cite{Delage22,FollmerSch02,Giesecke2008,Wiesemann2014}
for more information about loss functions. 
Shortfall risk is commonly used to model portfolio problems \cite{Delage22,Gundel08}. 
A utility maximization problem with shortfall risk constraints is studied in \cite{Gundel08}.
An expected utility maximization problem under a utility-based shortfall constraint
is studied in \cite{JankeLi16}.
A preference robust multivariate utility-based shortfall risk measure
is studied in \cite{ZhangXu20}.

Consider a portfolio comprising \(n\) assets.
Let \(x = (x_1,\ldots, x_n)^{\top}\) denote the vector of investment proportions
constrained in 
\begin{equation}\label{eq:setX}
	X \coloneqq \{x\in\re^n: x\ge 0,\, x_1+\cdots+x_n=1\}.
\end{equation}
Let $r = (r_1,\ldots, r_n)^{\top}$ denote the vector of stock returns within a time period.
The financial position of the portfolio is given by 
\[
Z = x^{\top}r.
\]
To compute the shortfall risk of $Z$, a classic approach is to treat 
$r$ as a random vector by itself, see in \cite{GuoXu19,Gupte2023,Hu2018}. 
Another approach is to model $r$ as a vector-valued function by some influence factors.
This is because real world stock returns are known to be driven by influence factors 
such as macroeconomic conditions, market sentiment, and company fundamentals. 
Traditional models usually assume stock returns are linearly dependent on these 
factors, including Fama-French three-factor model \cite{Fama} and 
Carhart's four-factor model \cite{Carhart}.
Interestingly, recent studies suggest that modeling nonlinear dependencies 
between stock returns and their influence factors could offer a more realistic
representation of actual market behavior \cite{Guidolin,Kanas01,Thawornwong}. 
Polynomials are convenient functions that are often used to approximate
nonlinear relations in portfolio selection problems \cite{Fernadez22,ZhuEs23}. 
There are extensive studies on polynomial optimization \cite{Lass01,Lau09,Niebook}.
This motivates us to model the dependence of stock returns on 
their influence factors via polynomial functions.

Let $\xi = (\xi_1,\ldots, \xi_p)^{\top}$ denote the random vector of influence factors
that follows the distribution of a probability measure $\mu$ supported on
a given set $S$.
Assume each \(r_{i}:\mathbb{R}^p\to\re\) is a polynomial function in $\xi$.
For the loss function $l$ given in \eqref{eq:loss} and 
the feasible set $X$ given in \eqref{eq:setX},
we study the shortfall risk portfolio model in form of the following 
distributionally robust optimization (DRO) problem.
\begin{equation}\label{eq:DRO shortfall risk}
	\left\{\begin{array}{cl}
		\min\limits_{(t,x)} & t\\
		\st &\sup\limits_{\mu\in\mathcal{M}} 
		\mathbb{E}_{\mu} [l(-x^{\top} r(\xi)-t)]\leq\lambda,\\
		& x\in X,\, t\in\re.
	\end{array}
	\right.
\end{equation}
In the above, the {\it ambiguity set} $\mathcal{M}$ is a family of probability 
measures supported on $S$, which is used to describe the uncertainty.
Ambiguity sets are typically constructed by truncated moment information
\cite{ChenSim19,DelageYe10,LasWei21,NieYang23,NieZhong23},
or take the form of distributional neighborhood around the empirical measure
via metrics such as the Wasserstein distance \cite{Esfahani18,GaoKley22} or $\phi$-divergence
\cite{BenDen13,ZhaoLi24}.
It is common to assume the true probability measure is contained in $\mc{M}$.
In this paper, we choose the moment-based ambiguity set
\begin{equation}\label{eq:ambi}
	\mathcal{M} \coloneqq \big\{
	\mu \in \mathcal{P}(S) : \mathbb{E}_{\mu}\left([\xi]_{d}\right) \in \mc{B}
	\big\}.
\end{equation}
Here $\mathcal{P}(S)$ denotes the set of probability measures supported on 
\( S \subseteq \mathbb{R}^p \), 
\begin{equation}\label{eq:vec_xi}
	[\xi]_d \coloneqq \begin{bmatrix} 
		1 & \xi_1 & \cdots & \xi_p & \xi_1^2 & \xi_1 \xi_2 & 
		\cdots & \xi_p^d \end{bmatrix}^{\top}
\end{equation}
is the monomial vector of $\xi$ up to degree $d$, and
\(\mc{B}\) is a moment feasible set given by linear, 
second-order or semidefinite constraints.
Distributionally robust optimization is a universal framework 
in stochastic optimization.
Polynomial distributionally robust optimization is studied in \cite{LasWei21,NieYang23,NieZhong23,Rao}. 
Distributionally robust shortfall risk models are 
studied in \cite{GuoXu19,ZhangXu20}.
However, to the best of our knowledge, 
only few shortfall risk models treat stock returns 
	as nonlinear functions of their influence factors.
	In addition, DRO problems defined with piecewise polynomials 
	have not been explored in \cite{LasWei21,NieYang23,NieZhong23}.
	This motivates to consider the polynomial distributionally robust shortfall
	risk portfolio model with piecewise linear loss functions.

\subsection*{Contribution}
In this paper, we propose a distributionally robust shortfall risk model to 
investigate portfolio management problems. 
Assume that stock returns are polynomial functions of their influence factors, 
which are not required to be convex. 
We model these influence factors as random variables and describe 
their uncertain distributions by moment-based ambiguity sets 
as in \eqref{eq:ambi}.
Suppose the loss function is piecewise linear in the decision variables.
We show that such a DRO model can be equivalently transformed into 
a dual pair of linear conic optimization problem with moment and
nonnegative polynomial cones. 
A semidefinite algorithm is proposed to solve the reformulated optimization,
of which the finite and asymptotic convergence properties are studied. 
Convenient conditions are given to verify the global optimality of the original 
distributionally robust shortfall risk problem.
In particular, our algorithm can recover the probability measure 
that satisfies the worst-case expectation constraint.
Our main contributions can be summarized as follows.
\begin{itemize}
	
	\item 
	We establish a factor-inclusive distributionally robust
	shortfall risk model with moment-based ambiguity sets.
	By assuming stock returns follow polynomial relations 
	with their influence factors, 
	our model is more flexible to represent the complex stock market 
	compared to linear models. 
	
	\item  
	For piecewise linear loss functions, 
	we show that our distributionally robust shortfall risk model can be
	transformed into linear conic optimization problems with moment and 
	nonnegative polynomial cones.

	\item 
	We propose a semidefinite algorithm to solve the transformed linear conic 
	optimization and study its finite and asymptotic convergence properties. 
	Numerical experiments are given show the effectiveness of our DRO model 
	and the efficiency of our algorithm.
	
\end{itemize}

The rest of the paper is organized as follows. 
Preliminaries are given in Section~\ref{sc:pre}.  
In Section~\ref{sc:trans}, we introduce the transformation of the distributionally 
robust shortfall risk problem into solvable linear conic optimization problems.
A Moment-SOS approach for solving the distributionally robust shortfall risk problem 
is summarized in Section~\ref{sc:momsos}.
Numerical examples and applications are presented in Section~\ref{sc:numerical}. 
The conclusions are given in Section~\ref{sc:con}.

\section{Preliminaries}\label{sc:pre}

\subsection*{Notation}
The symbol $\mathbb{N}$ denotes the set of nonnegative integers.
The symbol $\re$ (resp. $\re_+$) denotes the set of real 
(resp. nonnegative real) numbers. 
The $\N^n$ (resp. $\re^n$, $\re_+^n$) denotes the $n$-dimensional vector 
with entries in $\N$ (resp. $\re$, $\re_+$).
For $t\in\mathbb{R}$, $\lceil t\rceil$ denotes the smallest 
integer no less than $t$.
For a positive integer $d$, $[d]\coloneqq \{1,\ldots, d\}$. 
Let $x=(x_1,\ldots,x_n)^{\top}$ be a vector of variables.
The $\mathbb{R}[x]$ denotes the ring of real polynomials in $x$,
and $\re[x]_d$ denotes the subset of polynomials with degree at most $d$. 
For $f\in \re[x]$, its degree is denoted by $\deg(f)$. 
For a tuple of polynomials $g = (g_1,\ldots, g_m)^{\top}$, 
we use $\deg(g)$ to denote the highest degree of $g_i$ for all $i\in [m]$.
We use the symbol $e\coloneqq (1,\ldots, 1)^{\top}$ to denote the all-one vector,
and use $e_i$ to represent the $i$th standard basis vector in $\mathbb{R}^n$.
A symmetric matrix $A\in \mathbb{R}^{n\times n}$ is called positive 
semidefinite (psd) if $x^{\top}Ax\ge 0$ for all $x\in \mathbb{R}^n$.
The notation $A\succeq 0$ denotes that $A$ is psd.
Given $S\subseteq\re^p$, we use $\mathcal{P}(S)$ to denote the set of 
all probability measures supported on $S$, and $\mathcal{M}(S)$  for the set of all finite nonnegative measures supported on $S$.
For a given power vector $\alpha = (\alpha_1,\ldots,\alpha_n)^{\top}\in\mathbb{N}^n$, 
we denote the monomial
\[ 
x^{\alpha} \coloneqq x_1^{\alpha_1}\cdots x_n^{\alpha_n}\quad
\mbox{with}\quad |\alpha| \coloneqq \alpha_1+\cdots+\alpha_n. 
\]
For a degree $d$, denote the power set 
\[
\mathbb{N}_d^n\coloneqq \{\alpha\in\mathbb{N}^n: |\alpha|\le d\}
\]
and the monomial vector 
$[x]_d := (1,\, x_1,\, \cdots,\, x_n,\, x_1^2,\, \cdots,\, x_n^d)^{\top}$. 
Let $V$ be an inner product space equipped with the inner product 
$\langle \cdot, \cdot\rangle$,
whose dual space is denoted by $V^*$.
For a nonempty subset $S\subseteq V$, we use
\[
cone(S) \coloneqq \Big\{v\in V: v = \sum\limits_{i=1}^N \lambda_iv_i,\, 
v_i\in S,\, \lambda_i\ge 0,\, N\in\N\Big\}
\]
to denote the conic hull generated by $S$.
The dual cone of $S$ is 
\[
S^* \coloneqq \{u\in V^*: \langle u,v\rangle \ge 0\quad \forall v\in S\}.
\]
It is clear that $S^* = (cone(S))^*$.

\subsection{Nonnegative polynomials}
\label{ssc:PO}
A polynomial $\sigma\in \mathbb{R}[x]$ is said to be sum-of-squares (SOS) 
if it can be decomposed as 
\[
\sigma = \sigma_{1}^{2}+\cdots+\sigma_{k}^{2}\quad 
\mbox{for some}\quad
\sigma_1,\ldots,\sigma_k\in\mathbb{R}[x].
\]
We use $\Sigma[x]$ to denote the set of all SOS polynomials and 
use $\Sigma[x]_d\coloneqq \Sigma[x] \cap \mathbb{R}[x]_{d}$ 
to denote its $d$th order truncation.
Let $g =(g_1,\ldots,g_m)^{\top}$ be a polynomial vector in $x$.
The quadratic module of $g$ is defined by
\[ 
\qmod{g} \coloneqq \Sigma[x]+g_{1}\cdot\Sigma[x]+\cdots+g_{m} \cdot \Sigma[x]. 
\]
For $k\ge \lceil \deg(g)/2\rceil$, the $k$th order truncated quadratic module of $g$
is defined by
\[
\qmod{g}_k \coloneqq \Sigma[x]_{2 k}+g_{1} \cdot \Sigma[x]_{2k-\deg(g_1)}
+ \cdots+ g_{m} \cdot \Sigma[x]_{2k-\deg(g_m)}.
\]
Clearly, $\qmod{g}_k\subseteq \qmod{g}_{k+1}\subseteq \qmod{g}$ for each $k$. 
Let 
\[
K=\{ x\in\mathbb{R}^n: g(x)\ge 0 \}.
\]
The cone of nonnegative polynomials over $K$, denoted by $\mathscr{P}(K)$, 
is given by
\[ 
\mathscr{P}(K) \coloneqq \{ q\in \mathbb{R}[x]: q(x)\ge 0
\quad \forall x\in K \}. 
\]
Its $d$th degree truncation is denoted by 
$\mathscr{P}_d(K) \coloneqq \mathscr{P}(K)\cap \mathbb{R}[x]_d$.
If $q\in \qmod{g}$, then it is easy to verify that $q\in\mathscr{P}(K)$.
The $\qmod{g}$ is said to be {\it archimedean} if there exits 
$q\in \qmod{g}$ such that $q(x)\ge 0$ determines a compact set.
Suppose $\qmod{g}$ is archimedean. If a polynomial $f$ is positive on $K$, 
then $f\in \qmod{g}$. This conclusion is often referenced as 
{\it Putinar's Positivstellensatz} \cite{Putinar}.

\subsection{Truncated moment problems}
\label{ssc:tms}
Let $\xi = (\xi_1,\ldots, \xi_p)^{\top}$. 
A real vector $y = (y_{\alpha})$ labelled by $\alpha\in \N_d^p$ 
is called a truncated multi-sequence (tms) in $\xi$ of degree $d$.
A tms with respect to $x$ is defined similarly.
Let $\mathbb{R}^{\mathbb{N}_{d}^{p}}$ denote the collection of all tms 
in $\xi$ of degree $d$. For any tms $y\in \re^{\N_d^p}$,
	we define the linear Riesz functional $\mathscr{L}_y: \re[\xi]_d\to \re$
	by setting $\mathscr{L}_y(\xi^{\alpha}) = y_{\alpha}$ 
	for all $\alpha\in \N_d^p$ and extending linearly to $\re[\xi]_d$.
	This induces the following bilinear operation
	\begin{equation}\label{eq:<f,y>}
		\langle f, y\rangle:=\mathscr{L}_y(f)
	\end{equation}
	for all $y\in \mathbb{R}^{\mathbb{N}_{d}^{p}}$ and $f\in\re[\xi]_d$.
A tms $y\in\mathbb{R}^{\mathbb{N}_d^p}$ is said to admit a measure on $S$
if there exists a measure $\mu$ such that 
$y_{\alpha} = \int \xi^{\alpha}\mathtt{d}\mu$ 
for all $\alpha\in\mathbb{N}_d^p$.
Such a measure $\mu$ is called an $S$-representing measure of $y$.
The problem for checking whether or not a tms admits a measure is called the 
{\it truncated moment problem}.
\begin{example}
	Let $p = d =2$. 
	The tms $y=(y_0, y_{10}, y_{01}, y_{20}, y_{11}, y_{02})^{\top}$ admits the 
	Dirac measure $\delta_u$ supported at $u = (u_1,u_2)^{\top}$ if and only if 
	$y = (1, u_1, u_2, u_1^2, u_1u_2, u_2^2)^{\top}.$
	In particular, the zero tms $y=0$ admits the identically zero measure.
\end{example}
For a tms $y$, we use $\mbox{meas}(y, S)$ to denote the set of 
$S$-measures admitted by $y$.
Denote the set of $d$-th order tms that admit a measure supported on $S$ by
\[
\mathscr{R}_d(S) \coloneqq \{z \in \mathbb{R}^{\mathbb{N}_d^p} 
:\operatorname{meas}(z, S) \neq \emptyset\} .
\]
The $\mathscr{R}_d(S)$ is a convex cone, which is also closed 
when $S$ is compact. 
Interestingly, it can be equivalently expressed as 
	\[
	\mathscr{R}_d(S)=\operatorname{cone}
	\left(\left\{[\xi]_d: \xi \in S\right\}\right).
	\]
	This is because every $y\in\mathscr{R}_d(S)$ must also 
	admits a finitely atomic representing measure $\nu$ supported on $S$
	\cite[Theorem~2.7.1]{Niebook}.
	Based on the inner product defined in \eqref{eq:<f,y>}, 
	the dual relation holds that 
	\begin{equation}\label{eq:PDdual}
		\mathscr{R}_d(S)^* = \mathscr{P}_d(S),\quad
		\mathscr{P}_d(S)^* = \overline{\mathscr{R}_d(S)},
	\end{equation}
	where $\overline{\mathscr{R}_d(S)}$ denotes the closure of $\mathscr{R}_d(S)$
	\cite[Theorem~2.4.7]{Niebook}.	
	We also refer to  \cite{Nie15} and \cite{NieYang23} for 
	more details about this result.
For a generic semialgebriac set $S$, the moment cone $\mathscr{R}_d(S)$
is difficult to characterize in computations. 
A common approach is to approximate it by moment and localizing matrices.

\subsection{Moment and localizing matrices}
Let $q\in\mathbb{R}[x]_{2t}$. 
For $k\ge t$ and a tms $y\in\mathbb{R}^{\mathbb{N}_{2k}^n}$, 
there exists a symmetric matrix $L_q^{(k)}[y]$ such that 
\begin{equation}\label{eq:locmat}
	\operatorname{vec}(a)^{\top}(L_{q}^{(k)}[y])
	\operatorname{vec}(b) = \mathscr{L}_{y}(q a b),
	\quad \forall a,b\in\mathbb{R}[x]_{k-t},
\end{equation}
where $\operatorname{vec}(a), \operatorname{vec}(b)$ 
denote the coefficient vectors of $a, b$ respectively. 
The $L_q^{(k)}[y]$ is called the $k$th {\it localizing matrix} of $q$. 
For the special case that $q=1$ is the constant polynomial, 
we define the moment matrix by
\begin{equation}\label{eq:mommat}
	M_{k}[y] \coloneqq L_{1}^{(k)}[y].
\end{equation}	
\begin{example}
	Let $n=2$, $q(x) = x_1-x_2^2$. We have
	\[
	M_1[y] = \bbm y_{00} & y_{10} & y_{01}\\
	y_{10} & y_{20} & y_{11}\\
	y_{01} & y_{11} & y_{02}\ebm,\quad
	L_q^{(2)}[y] = \bbm y_{10}-y_{02} & y_{20}-y_{12} & y_{11}-y_{03}\\
	y_{20}-y_{12} & y_{30}-y_{22} & y_{21}-y_{13}\\
	y_{11}-y_{03} & y_{21}-y_{13} & y_{12}-y_{04}\ebm.
	\]
\end{example}
Let $g = (g_1,\ldots,g_m)^{\top}$ be a polynomial vector in $x$.
For $k\ge \lceil \deg(g)/2\rceil$, 
define the tms cone
\begin{equation} \label{mom:S(g):2d}
	\mathscr{S}[g]_{k} \coloneqq
	\big\{
	y \in \mathbb{R}^{ \mathbb{N}^n_{2k} }:
	M_k[y] \succeq 0, \,  L_{g_1}^{(k)}[y] \succeq 0, \ldots,
	L_{g_m}^{(k)}[y] \succeq 0
	\big\}.
\end{equation}
Given $K = \{x\in\re^n: g(x)\ge 0\}$, a necessary condition for $y\in\mathscr{R}_{2k}(K)$
is that $y\in\mathscr{S}[g]_k$.
It can be verified that the following dual relations hold:
\begin{equation}\label{eq:qmod_dual}
	(\qmod{g}_k)^* = \mathscr{S}[g]_k,\quad
	(\mathscr{S}[g]_k)^* = \overline{\qmod{g}_k}.
\end{equation}
For more details about truncated moment problems, 
we refer to \cite{NieAtrunc}, \cite[Chapter~2.7]{Niebook} 
and references therein.

\section{Transformation of portfolio DRO problem}
\label{sc:trans}

In this section, we transform the distributionally 
robust shortfall risk problem 
\eqref{eq:DRO shortfall risk} into a linear cone optimization problem.  

\subsection{Reformulation of the worst-case expectation constraint}
Consider the moment ambiguity set $\mc{M}$ as in \eqref{eq:ambi}.
Since  $\mc{M}\subseteq \mc{P}(S)$ is a set of probability measures, 
we have $\mathbb{E}_{\mu}[\lambda] = \lambda$ for 
every $\mu\in \mc{M}$ and any prescribed confidence level $\lambda>0$.
Hence the expectation constraint
\[
\mathbb{E}_{\mu} [l(-x^{\top} r(\xi)-t)]\leq\lambda
\,\,\Leftrightarrow\,\,
\mathbb{E}_{\mu}[\lambda-l(-x^{\top}r(\xi)-t)]\geq 0.
\]
Recall that $l(Z) = \sup\{a_jZ+b_j: j\in[m]\}$. 
For $j = 1,\ldots, m$, denote
\begin{equation}\label{eq:f_j}
	f_j(x,t,\lambda,\xi) \coloneqq 
	\lambda - \big(a_j(-x^{\top}r(\xi)-t)+b_j\big).
\end{equation}
	The worst-case expectation constraint in \eqref{eq:DRO shortfall risk} 
	thus becomes
	\begin{equation}\label{eq:inf_ex_inf}
		\inf_{\mu \in \mathcal{M}} \mathbb{E}_\mu
		\Big[\inf _{j \in[m]} f_j(t,x,\lambda,\xi)\Big] \geq 0.
	\end{equation}
	\begin{lemma}\label{lem:equiv_cons} 
		For given $(t,x,\lambda)$, the constraint \eqref{eq:inf_ex_inf} holds if 
		and only if 
		\begin{equation}\label{eq:sum_ex_ge}
			\sum\limits_{j=1}^m \mathbb{E}_{\mu_j} [f_j(t,x,\lambda, \xi)]\ge 0
			\quad \mbox{for all}\quad
			\begin{array}{c}
				\mu_1,\ldots,  \mu_m\in \mc{M}(S)\,\,\mbox{with}\\
				\mu_1+\cdots+\mu_m\in \mc{M},
			\end{array}
		\end{equation}
		where $\mc{M}(S)$ is  the set of all finite nonnegative measures supported on $S$. In particular, if there exist measures 
		$\mu_1^*, \ldots, \mu_m^*\in\mc{M}(S)$
		with $\mu^* = \sum_{j=1}^m \mu_j^*\in\mc{M}$ such that
		\begin{equation}\label{eq:exp=infexp}
			\sum_{j=1}^m  \mathbb{E}_{\mu_j^*}[f_j(t,x,\lambda,\xi)]
			\,=\,
			\inf\limits_{\substack{\mu_1,\ldots,\mu_m\in\mc{M}(S)\\
					\mu_1+\cdots+\mu_m\in\mc{M}}}
			\sum\limits_{j=1}^m  \mathbb{E}_{\mu_j}[f_j(t,x,\lambda,\xi)] 
		\,=\, 0, 
	\end{equation}
	then $\mu^*$ is a worst-case measure of \eqref{eq:inf_ex_inf}
	that satisfies
	\[
	\mathbb{E}_{\mu^*}\Big[\inf\limits_{j\in[m]} f_j(t,x,\lambda,\xi)\Big] = 
	\inf\limits_{\mu\in \mc{M}}\mathbb{E}_{\mu}\Big[\inf\limits_{j\in[m]}
	f_j(t,x,\lambda,\xi)\Big] = 0.
	\]
\end{lemma}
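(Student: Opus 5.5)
The plan is to prove the equivalence of \eqref{eq:inf_ex_inf} and \eqref{eq:sum_ex_ge} through the identity
\[
\inf_{\mu\in\mc{M}}\mathbb{E}_\mu\Big[\inf_{j\in[m]} f_j\Big]
\;=\;
\inf_{\substack{\mu_1,\ldots,\mu_m\in\mc{M}(S)\\ \mu_1+\cdots+\mu_m\in\mc{M}}}
\sum_{j=1}^m \mathbb{E}_{\mu_j}[f_j],
\]
established by two inequalities. Throughout we fix $(t,x,\lambda)$ and write $f_j(\xi)$ for short. We assume the expectations involved are finite, e.g.\ because $\mc{M}$ bounds moments up to the degree of $r$; otherwise we read them in the extended sense. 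The measurability issues are mild, since each $f_j$ is a polynomial in $\xi$ and $\min_j f_j$ is continuous.

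For ``$\le$'', take any feasible $(\mu_1,\ldots,\mu_m)$ and set $\mu=\sum_j\mu_j\in\mc{M}$. Since $f_j\ge \min_k f_k$ pointwise on $S$ and each $\mu_j$ is nonnegative,
\[
\sum_j\mathbb{E}_{\mu_j}[f_j]\ge\sum_j\mathbb{E}_{\mu_j}[\min_k f_k]=\mathbb{E}_\mu[\min_k f_k]\ge \inf_{\nu\in\mc{M}}\mathbb{E}_\nu[\min_k f_k].
\]

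For ``$\ge$'', take any $\mu\in\mc{M}$ and partition $S$ measurably by the first active index. Set
\[
S_j=\Big\{\xi\in S: f_j(\xi)=\min_k f_k(\xi),\ f_i(\xi)>f_j(\xi)\ \forall i<j\Big\}.
\]
These sets are Borel because the $f_j$ are continuous, they are disjoint, and they cover $S$. Define $\mu_j=\mu|_{S_j}$. Then $\mu_j\in\mc{M}(S)$, $\sum_j\mu_j=\mu\in\mc{M}$, and
\[
\sum_j\mathbb{E}_{\mu_j}[f_j]=\sum_j\int_{S_j}\min_k f_k\,d\mu=\mathbb{E}_\mu[\min_k f_k].
\]
Hence the right-hand infimum is at most $\mathbb{E}_\mu[\min_k f_k]$ for every $\mu\in\mc{M}$, which gives the identity. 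Condition \eqref{eq:inf_ex_inf} says the left side is $\ge 0$, and \eqref{eq:sum_ex_ge} says the right side is $\ge 0$, so the two are equivalent.

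For the ``in particular'' claim, let $\mu_j^*$ satisfy \eqref{eq:exp=infexp}. The chain from the ``$\le$'' step applied to $\mu^*$ gives
\[
0=\sum_j\mathbb{E}_{\mu_j^*}[f_j]\ \ge\ \mathbb{E}_{\mu^*}[\min_k f_k]\ \ge\ \inf_{\mu\in\mc{M}}\mathbb{E}_\mu[\min_k f_k].
\]
The identity says this last infimum equals the infimum in \eqref{eq:exp=infexp}, which is $0$. So every term in the chain equals $0$, and $\mu^*$ attains the infimum with value $0$.

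The only delicate point is the ``$\ge$'' direction: we need a measurable selection of a minimizing index so that each $\mu_j$ is a genuine nonnegative Borel measure on $S$. The first-index partition above handles this. Finiteness of the integrals is handled by the moment assumption noted at the start.
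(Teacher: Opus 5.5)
Your proof is correct and follows the paper's argument: the same first-active-index partition $S_1,\ldots,S_m$ with restrictions $\mu|_{S_j}$ for one direction, the pointwise bound $f_j\ge\min_k f_k$ for the other, and the same sandwich chain for the worst-case claim. Stating the two directions as an explicit equality of the two infima is a small tidying, and it makes fully explicit that $\mu^*$ attains the infimum over $\mathcal{M}$ rather than merely giving value $0$.
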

\begin{proof}
	For convenience, we denote $f_j(\xi) = f_j(t,x,\lambda,\xi)$ 
	at given $(t,x,\lambda)$. Let
	\[\begin{array}{l}
		S_1\coloneqq \{\xi\in S: f_1(\xi) = \inf_{i\in[m]} f_i(\xi)\},\\
		S_j \coloneqq \left\{
		\xi \in S: f_j(\xi) = \inf_{i\in[m]} f_i(\xi),\,
		\xi\not\in S_1\cup\cdots\cup S_{j-1}
		\right\}
	\end{array}\]
	for $j=2,\ldots, m$. It is clear that $S_1,\ldots, S_m$ form a 
	disjoint decomposition of $S$ such that 
	$f_j(\xi) = \inf_{i\in[m]} f_i(\xi)$ for every $\xi\in S_j$.
	Let $\mu\in \mc{M}$ be an arbitrary measure.
	We use $\mu|_{S_j}$ to denote its restriction on $S_j$ for each $j$.
	It holds that
	\[
	\mathbb{E}_\mu\Big[\inf_{j \in[m]} f_j(\xi)\Big]
	= \sum_{j=1}^m \mathbb{E}_{\mu|_{S_j}}[f_j(\xi)] 
	\geq \inf_{\substack{\mu_1, \ldots, \mu_m \in \mc{M}(S) \\ 
			\mu_1+\cdots+\mu_m \in \mathcal{M}}}
	\sum_{j=1}^m\mathbb{E}_{\mu_j}\left[f_j(\xi)\right].
	\]
	Since the above holds for arbitrary $\mu\in \mc{M}$, 
	the condition \eqref{eq:sum_ex_ge} is sufficient 
	for the inequality \eqref{eq:inf_ex_inf} to hold.
	Conversely, for any $\mu_1,\ldots, \mu_m\in \mc{M}(S)$
	such that $\bar{\mu} = \sum_{j=1}^m \mu_j\in \mc{M}$,
	we have
	\[
	\inf\limits_{\mu\in \mc{M}} \mathbb{E}_{\mu}\Big[\inf\limits_{j\in[m]} f_j(\xi)\Big]
	\le \mathbb{E}_{\bar{\mu}}\Big[\inf\limits_{j\in[m]} f_j(\xi)\Big]
	= \sum\limits_{j=1}^m\mathbb{E}_{\mu_j}\Big[\inf\limits_{j\in[m]} f_j(\xi)\Big]
	\le \sum\limits_{j=1}^m\mathbb{E}_{\mu_j}[f_j(\xi)].
	\]
	This implies that \eqref{eq:sum_ex_ge} is also a necessary condition for
	\eqref{eq:inf_ex_inf} to hold.
	
	In particular, suppose the equality \eqref{eq:exp=infexp} is satisfied with 
	measures $\mu_1^*,\ldots, \mu_m^*\in \mc{M}(S)$ 
	and $\mu^* = \sum_{j=1}^m \mu_j^*\in\mc{M}$. Then
	\[
	0 = 
	\inf\limits_{\substack{\mu_1,\ldots,\mu_m\in \mc{M}(S)\\
			\mu_1+\cdots+\mu_m\in\mc{M}}}
	\sum\limits_{j=1}^m \mathbb{E}_{\mu_j}[f_j(\xi)]
	\le \mathbb{E}_{\mu^*}\Big[\inf\limits_{j\in[m]}f_j(\xi)\Big]
	\le \sum\limits_{j=1}^m \mathbb{E}_{\mu_j^*}[f_j(\xi)]
	= 0.
	\] 
	This implies that $\mu^*$ is a worst-case measure of \eqref{eq:inf_ex_inf}.
\end{proof}
By Lemma~\ref{lem:equiv_cons}, the DRO problem \eqref{eq:DRO shortfall risk} 
is equivalent to
\begin{equation}\label{eq:SC_DRO shortfall risk}
	\left\{\begin{array}{rl}
		\min \limits_{(t, x)} & t \\
		\st &  t\in \re, \, x\in X,\\
		& \sum\limits_{j=1}^m 
		\mathbb{E}_{\mu_j}\left[ f_j(t,x,\lambda, \xi)\right] \geq 0\\
		& \forall \mu_j\in \mc{M}(S),\,
		\sum\limits_{j=1}^m\mu_j\in \mc{M},\,j\in[m].
	\end{array}\right.
\end{equation}
In the above, each polynomial 
$f_j(t,x,\lambda,\xi) = \lambda - (a_j(-x^{\top}r(\xi)-t)+b_j )$
is linear in the decision variables $(t,x)$. 
Recall that $d$ is the highest moment order used in the ambiguity 
set $\mc{M}$. For convenience, consider
$\deg(r) = d$. For cases that $\deg(r)\not=d$, 
one can make similar analysis with $\max\{\deg(r), d\}$.
For each $j$, there exist a unique matrix 
$A_j\in\mathbb{R}^{\binom{p+d}{d} \times n}$ 
and a vector $c_{j} \in \mathbb{R}^{\binom{p+d}{d}}$ such that
\begin{equation}\label{eq:Ajbj}
	f_j(t,x,\lambda,\xi) = (a_je_1\cdot t+A_jx+c_j)^{\top}[\xi]_{d},
\end{equation}
where $e_1 = (1,0,\cdots,0)^{\top}\in\re^{\binom{p+d}{d}}$ is the unit vector 
and $[\xi]_{d}$ is the monomial vector as in \eqref{eq:vec_xi}.
For any $\mu_j\in\mc{M}(S)$, it holds that
\[
\mathbb{E}_{\mu_j}[f_j(t,x,\lambda,\xi)]
=(a_je_1\cdot t+A_j x+c_j)^{\top}\mathbb{E}_{\mu_j}[ [\xi]_{d} ] .
\]
We use the following example to show how to construct such $A_j, c_j$. 
\begin{example}\label{ex:Ajbj}
Consider $n = 3, p =2$ and $r = (r_1,r_2,r_3)^{\top}$ with
\[
r(\xi) = \bbm 0.5+0.6\xi_{1}-\xi_{2}+2\xi_{1}^{2}+\xi_{1}\xi_{2}-\xi_{2}^{2}\\
-0.02+0.2\xi_{1}+3\xi_{2}-2\xi_{1}^{2}-2\xi_{1}\xi_{2}+\xi_{2}^{2}\\ 
0.3+0.3\xi_{1}-1.5\xi_{2}+\xi_{1}^{2}+\xi_{1}\xi_{2}+2\xi_{2}^{2}\ebm.
\]
For $a_j = b_j = 1$ and $\lambda = 0.6$, we obtain 
\[\begin{aligned}
	f_j(t,x,\lambda, \xi)
	& = a_j(x^{\top}r(\xi)+t)+(\lambda-b_j) \\
	& = t+ x^{\top}r(\xi) - 0.4\\
	& = \left(\bbm 1\\0\\0\\0\\0\\0 \ebm t
	+ \underbrace{
		\left[\begin{array}{rrr} 
			0.5 & -0.02 & 0.3\\
			0.6 & 0.2 & 0.3\\
			-1 & 3 & -1.5\\
			2 & -2 & 1 \\
			1 & -2 & 1\\
			-1 & 1 & 2\end{array}\right]}_{A_j}
	\bbm x_1\\x_2\\x_3\ebm +
	\underbrace{\bbm - 0.4\\0\\0\\0\\0\\0\ebm}_{c_j}\right)^{\top} 
	\underbrace{\bbm 1\\ \xi_1\\ \xi_2\\ \xi_1^2\\ \xi_1\xi_2\\ \xi_2^2\ebm}_{[\xi]_2}.
\end{aligned}\]
\end{example}

\subsection{Linear conic transformation}
As shown in Subsection~\ref{ssc:tms},
the cone of $d$-th order truncated moments that admit measures
supported on $S$ is
$\mathscr{R}_d(S)=\operatorname{cone}
\left(\left\{[\xi]_d: \xi \in S\right\}\right)$. Denote
\begin{equation}\label{eq:K}
	K\coloneqq cone\left(\{\mathbb{E}_{\mu}[ [\xi]_{d}]: \mu\in\mc{M}\}\right)
	= \mathscr{R}_d(S)\cap \operatorname{cone}(\mc{B}).
\end{equation}
With the reformulation \eqref{eq:Ajbj}, the expectation constraint
in \eqref{eq:SC_DRO shortfall risk} is satisfied if and only if 
the following optimization problem has a nonnegative optimal value.
\begin{equation}\label{eq:theta_min}
	\left\{\begin{array}{rl}
		\theta\coloneqq \min\limits_{(y_1,\ldots, y_m)} & 
		\sum\limits_{j=1}^m (a_je_1\cdot t+ A_jx+c_j)^{\top}y_j\\
		\st & y_j\in\mathscr{R}_d(S),\,j\in[m],\\
		& y_1+\cdots+y_m\in K.
	\end{array}\right.
\end{equation}
By Lagrangian duality theory and the dual relation \eqref{eq:PDdual}, 
$\theta\ge 0$ if and only if there exists $w^{\top}[\xi]_d\in K^*$ such that 
\[
(a_je_1\cdot t+A_jx+c_j-w)^{\top}[\xi]_d\in \mathscr{P}_d(S),\quad j=1,\ldots, m,
\]
where $K^*$ is the dual cone of $K$ induced by the bilinear operation \eqref{eq:<f,y>}. 
Thus, we obtain the linear conic reformulation 
of the DRO problem \eqref{eq:SC_DRO shortfall risk} as
\begin{equation}\label{eq:DRO_equiv}
	\left\{\begin{array}{rl}
		\min\limits_{(t,x, w)} & t \\
		\st & t\in \mathbb{R},\,w^{\top}[\xi]_d\in K^*,\\
		&  x\in X = \{x\in\re^n: e^{\top}x = 1,x\ge 0\},\\
		& (a_je_1\cdot t+ A_j x + c_j - w)^{\top}[\xi]_d \in \mathscr{P}_d(S),\quad \forall j\in[m].
	\end{array}
	\right.
\end{equation}
where $e = (1,\ldots, 1)^{\top}\in\re^n$ is the vector of all ones.
Since \eqref{eq:SC_DRO shortfall risk} is equivalent to \eqref{eq:DRO shortfall risk},
the above problem is also an equivalent linear conic optimization
reformulation of the original DRO problem.

Next, we construct the Lagrangian dual problem of \eqref{eq:DRO_equiv}. 
Let 
\[ 
\gamma_0\in\re,\quad 
\gamma = (\gamma_1,\ldots, \gamma_n)^{\top}\ge 0,\quad  
\bar{y}\in \overline{K},\quad
y_1,\ldots, y_m\in\overline{\mathscr{R}_d(S)},
\]
where $\overline{K}$ and $\overline{\mathscr{R}_d(S)}$ are closures of 
$K$ and $\mathscr{R}_d(S)$ respectively.
The Lagrange function for problem \eqref{eq:DRO_equiv} is 
\[
\begin{aligned}
	&\mathcal{L}\left(t,x,w; \gamma_0, \gamma,\bar{y}, y_1,\ldots, y_m\right)\\
	&= t - \gamma_0(e^\top x - 1) - \gamma^\top x - 
	\bar{y}^{\top} w  - \sum_{j=1}^m y_j^{\top} 
	(a_je_1\cdot t+ A_j x + c_j - w ) \\
	&= \gamma_0 - \sum_{j=1}^m y_j^{\top} c_j + t\Big(1-\sum\limits_{j=1}^m a_je_1^{\top}y_j\Big)
	-x^{\top}\Big(\gamma_0e+\gamma+\sum\limits_{j=1}^m A_j^{\top}y_j\Big)
	-w^{\top}\Big(\bar{y}-\sum\limits_{j=1}^m y_j\Big) .
\end{aligned}
\]
For given dual variables, 
the Lagrangian function is bounded from below for 
all $(t,x,w)$ in the whole space
if and only if 
\[
1-\sum\limits_{j=1}^m a_je_1^{\top}y_j = 0,\quad
\gamma_0e+\gamma+\sum\limits_{j=1}^m A_j^{\top}y_j = 0,\quad
\bar{y}-\sum\limits_{j=1}^m y_j = 0.
\] 
Consequently, the dual problem of \eqref{eq:DRO_equiv} is
\begin{equation}\label{eq:DRO_dual}		
	\left\{\begin{array}{cl}
		\max\limits_{(\gamma_0, \gamma, y_1,\ldots, y_m)}& 
		\gamma_0-\sum\limits_{j=1}^m c_j^{\top}y_j \\
		\st & \gamma_0\in\re,\, \gamma\in\re^n,\,
		\gamma\ge 0,\, y_j\in \overline{\mathscr{R}_d(S)},\,j\in[m],\\
		& \sum\limits_{j=1}^m a_j(e_1^{\top}y_j) = 1,\,
		\gamma_0e+\gamma+\sum\limits_{j=1}^m A_j^{\top}y_j = 0,\\
		& y_1+\cdots+y_m\in \overline{K}.
	\end{array}\right.
\end{equation}	
\begin{theorem}\label{thm:equiv}
	Suppose $(t^*, x^*,w^*)$ is an optimizer of \eqref{eq:DRO_equiv}
	and $(\gamma_0^*, \gamma^*, y_1^*,\ldots,y_m^*)$ is an optimizer of \eqref{eq:DRO_dual}.
	Assume $S$ is compact and $K=\overline{K}$ and the strong duality between \eqref{eq:DRO_equiv}--\eqref{eq:DRO_dual}.
	Then the problem \eqref{eq:DRO_dual} is equivalent 
	to \eqref{eq:DRO shortfall risk} 
	in the sense that they share the same optimal value and that
	$y^* = y_1^*+\cdots+y_m^*$ admits a nonzero multiple of
	a worst-case measure $\mu^*$ that satisfies
	\[
	\mathbb{E}_{\mu^*}\Big[\inf\limits_{j\in[m]} f_j(t^*,x^*,\lambda,\xi)\Big] = 
	\inf\limits_{\mu\in \mc{M}}\mathbb{E}_{\mu}\Big[\inf\limits_{j\in[m]}
	f_j(t^*,x^*,\lambda,\xi)\Big] = 0.
	\]	
\end{theorem}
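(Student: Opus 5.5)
The plan is to argue through the primal reformulation, strong duality, and complementary slackness, and then read off a measure with Lemma~\ref{lem:equiv_cons}.

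\emph{Same optimal value.} By Lemma~\ref{lem:equiv_cons}, \eqref{eq:DRO shortfall risk} is equivalent to \eqref{eq:SC_DRO shortfall risk}. By the conic duality argument preceding \eqref{eq:DRO_equiv}, that problem is equivalent to \eqref{eq:DRO_equiv}; this uses \eqref{eq:PDdual} and $K=\overline K$, so no closure gap appears when dualizing \eqref{eq:theta_min}. Hence $t^*$ is the optimal value of \eqref{eq:DRO shortfall risk}. The assumed strong duality then gives
\[
t^*=\gamma_0^*-\sum_{j=1}^m c_j^\top y_j^*.
\]
So \eqref{eq:DRO_dual} shares the optimal value.

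\emph{Complementary slackness.} Expand $t^*-\big(\gamma_0^*-\sum_j c_j^\top y_j^*\big)=0$ using the dual equality constraints, namely $\sum_j a_je_1^\top y_j^*=1$, $\gamma_0^*e+\gamma^*+\sum_jA_j^\top y_j^*=0$ and $e^\top x^*=1$. This yields
\[
0=\gamma^{*\top}x^*+\sum_{j=1}^m \langle (a_je_1t^*+A_jx^*+c_j-w^*)^\top[\xi]_d,\,y_j^*\rangle+\langle w^{*\top}[\xi]_d,\,y^*\rangle .
\]
Every term is nonnegative: $\gamma^*,x^*\ge0$; each polynomial lies in $\mathscr{P}_d(S)$ while $y_j^*\in\overline{\mathscr{R}_d(S)}$; and $w^{*\top}[\xi]_d\in K^*$ while $y^*\in\overline K=K$. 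Hence all terms vanish.

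Adding the middle and last groups gives
\[
\sum_j (a_je_1t^*+A_jx^*+c_j)^\top y_j^*=0,
\]
that is, $\sum_j \langle f_j(t^*,x^*,\lambda,\cdot),y_j^*\rangle=0$.

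\emph{Constructing the measure.} Since $S$ is compact, $\mathscr{R}_d(S)$ is closed, so each $y_j^*$ admits a (finitely atomic) measure $\mu_j^*\in\mc M(S)$. Since $y^*\in K=\mathscr{R}_d(S)\cap\cone{\mc B}$ and $y^*$ is a nonzero multiple of a moment vector, we can write $y^*=\rho\,\mathbb{E}_{\mu}[[\xi]_d]$ for some $\mu\in\mc M$.

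\emph{Main obstacle: nonzero $\rho$.} I need $\rho=(y^*)_0>0$. From $\sum_j a_j (y_j^*)_0=1$, some $(y_j^*)_0>0$. Since all $(y_j^*)_0\ge0$, this gives $\rho=\sum_j(y_j^*)_0>0$. A second subtlety is that $\mu$ need not equal $\sum_j\mu_j^*$. So instead I set $\nu_j=\mu_j^*/\rho$; then $\sum_j\nu_j$ has moments $y^*/\rho\in\mc B$ with unit mass, hence $\sum_j\nu_j\in\mc M$.

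\emph{Worst-case identity.} The infimum in \eqref{eq:exp=infexp} at $(t^*,x^*)$ is nonnegative because $(t^*,x^*)$ is feasible for \eqref{eq:SC_DRO shortfall risk}. Since $\sum_j\mathbb{E}_{\nu_j}[f_j]=0/\rho=0$, the infimum is attained with value $0$. Lemma~\ref{lem:equiv_cons} then gives that $\mu^*:=\sum_j\nu_j$ is a worst-case measure with the stated identity, and $y^*=\rho\,\mathbb{E}_{\mu^*}[[\xi]_d]$ is a nonzero multiple of its moment vector.
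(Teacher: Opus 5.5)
Your proposal is correct and follows essentially the same route as the paper. The shared steps are: strong duality; the identity $\sum_j(a_je_1t^*+A_jx^*+c_j)^\top y_j^*=-\gamma^{*\top}x^*$, squeezed to zero using the $w^*$-decomposition; closedness of $\mathscr{R}_d(S)$, which gives atomic measures $\mu_j^*$; and the normalized sum $\sum_j\mu_j^*/\rho$ as the worst-case measure. Your explicit checks that $\rho=\sum_j(y_j^*)_0>0$, and that membership in $\mathcal{M}$ depends only on the moment vector $y^*/\rho\in\mathcal{B}$, fill in details the paper's proof leaves implicit.
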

\begin{proof}
	By previous analysis, the problem \eqref{eq:DRO_equiv} is 
	an equivalent reformulation of \eqref{eq:DRO shortfall risk}.
	By the strong duality,
	the problem \eqref{eq:DRO_dual} has the same optimal value with the original DRO.
	Moreover, by the feasibility of \eqref{eq:DRO_dual}, we get
	\[
	t^* = \gamma_0^*-\sum_{j=1}^m c_j^{\top}y_j^*+ 
	\Big(	1-\sum\limits_{j=1}^m a_je_1^{\top}y_j^*\Big)t^*
	- \Big(\gamma_0^*e+\gamma^* + \sum\limits_{j=1}^m A_j^{\top}y_j^*\Big)^{\top}x^*.
	\]
	After simplification with $e^{\top}x^* = 1$, it implies
	\[
	0\le \sum\limits_{j=1}^m (a_je_1\cdot t^*+A_jx^*+c_j)^{\top}y_j^*
	= -(\gamma^*)^{\top}x^*\le 0,
	\]
	where the right inequality holds by $\gamma^*\ge 0, x^*\ge 0$
	and the left inequality holds by 
	\[
	\sum\limits_{j=1}^m 
	\big\langle (a_je_1\cdot t^*+A_jx^*+c_j-w^*)^{\top}[\xi]_d,y_j^*\big\rangle
	+ \Big\langle (w^*)^{\top}[\xi]_d, \sum\limits_{j=1}^m y_j^*\Big\rangle \ge 0.
	\]
	Consequently, $\sum_{j=1}^m (a_je_1\cdot t^*+A_jx^*+c_j)^{\top}y_j^* = 0$.
	Under the assumption that $S$ is compact, $\mathscr{R}_d(S)$ is closed 
	by \cite[Theorem~2.4.7]{Niebook}, thus each $y_j^*$ uniquely admits a 
	finitely atomic measure $\mu_j^*$ supported on $S$.
	In addition, $\sum_{j=1}^m a_j(e_1^{\top}y_j)=1$ and $\overline{K} = K$ 
	ensure that $y^* = \sum_{j=1}^m y_j^*$ admits a nonzero multiple 
	of the probability measure 
	$\mu^* = (\sum_{j=1}^m\mu_j^*)/(\sum_{j=1}^m e_1^{\top}y_j^*)\in \mc{M}$.
	It is easy to verify that 
	\[
	0\le \mathbb{E}_{\mu^*}\Big[\inf\limits_{j\in[m]} f_j(t^*,x^*,\lambda,\xi)\Big]
	\le \Big(1/\sum\limits_{j=1}^m e_1^{\top}y_j^*\Big)
	\sum\limits_{j=1}^m \mathbb{E}_{\mu_j^*}[f_j(t^*,x^*, \lambda,\xi)] = 0,
	\]
	where the left inequality holds since $t^*$ is the minimum of \eqref{eq:DRO shortfall risk}.
	Therefore, $\mu^*$ is a worst-case measure.
	\end{proof}

	\subsection{Moment-SOS relaxations}\label{ssc:Kstruc}
To solve the dual pair \eqref{eq:DRO_equiv}--\eqref{eq:DRO_dual}, 
we need to find convenient expressions for 
convex cones $K^*$, $\mathscr{P}_d(S)$ and $\overline{\mathscr{R}_d(S)}$, $\overline{K}$.
When $\xi$ is multivariable, $\mathscr{P}_d(S)$ and $\overline{\mathscr{R}_d(S)}$
are usually approximated by semidefinite constraints.
Recall that $\mc{B}\subseteq \re^{\N_d^p}$ is given by linear, second-order
or semidefinite constraints and that 
$K = \mathscr{R}_d(S)\cap \operatorname{cone}(\mc{B})$.
As shown in \cite{NieYang23,NieZhong23}, 
if $S, \mc{B}$ are compact and $K$ has a nonempty interior, 
we have 
\[
\mathscr{R}_d(S) = \overline{\mathscr{R}_d(S)},\quad
K = \overline{K},\quad
K^* = \big(\mathscr{R}_d(S)\cap \operatorname{cone}(\mc{B})\big)^* = \mathscr{P}_d(S)+\mc{B}^*.
\]
In particular, if $\xi$ is univariate, $K$ and its dual can be explicitly 
expressed by semidefinite constraints.
\begin{example}
Consider that $S = [0,1]$, $\mc{B} = [0,1]^3$, and
\[
\mc{M} = \{\mu\in \mc{P}([0,1]): \mathbb{E}_{\mu}[[\xi]_2]\in[0,1]^3\}.
\]
In this case, $\mathscr{R}_2([0,1]) = \mathscr{S}[\xi-\xi^2]_1$
and $\overline{cone(\mc{B})} = cone(\mc{B}) = \re_+^3$. Hence
\[
K = \left\{ y= \bbm y_0\\y_1\\y_2\ebm\in\re_+^3:
M_1[y] = \bbm y_0 & y_1\\y_1 & y_2 \ebm\succeq 0,\, 
L_{\xi-\xi^2}^{(1)}[y] = y_1-y_2\ge 0
\right\}.
\]
Since $S,\mc{B}$ are compact and $K$ has a nonempty interior, 
we must have 
\[ K^* = \mathscr{P}_2(S)+\mc{B}^*. \]
This can also be computed by the inner product of the Euclidean space:
\[\begin{aligned}
	K^* &= \re_+^3+\left\{\bbm q_0\\q_1\\q_2\ebm\in\re^3:
	\bbm q_0 & \frac{1}{2}q_1\\ \frac{1}{2}q_1 & q_2\ebm\succeq 0
	\right\}+\left\{\bbm 0\\q_1\\q_2\ebm\in\re^3:
	q_1 = -q_2\ge 0
	\right\}\\
	&= cone(\mc{B})^*+ \{q = q_0+q_1\xi+q_2\xi^2: q\in \Sigma[\xi]_2\} +
	\{q_1(\xi-\xi^2): q_1\ge 0\}\\
	& = \mc{B}^*+\mathscr{P}_2(S).
\end{aligned}\]
However, it is much more efficient to express $K^*$ as a nonnegative polynomial cone
under the bilinear operation as in \eqref{eq:<f,y>}.
\end{example}

For convenience, we make the following assumption.
\begin{assumption}\label{ass:closed}
Suppose $\cone{\mc{B}}$ is closed, $K^* = \mathscr{P}_d(S)+\mc{B}^*$, 
and $S$ is a compact semi-algebraic set defined by 
\begin{equation}\label{eq:S}
	S = \{\xi\in\re^n: g_1(\xi)\ge 0,\ldots, g_{m_1}(\xi)\ge 0\}
\end{equation}
for a polynomial tuple $g = (g_1,\ldots, g_{m_1})^{\top}$.
\end{assumption}
Under the above assumption, the nonnegative polynomial cone 
$\mathscr{P}_d(S)$ can be approximated 
by the quadratic module of $g$.
Indeed, if $\qmod{g}$ is archimedean, 
then by \cite[Proposition~8.2.1]{Niebook}, we have
\[
\mbox{int}\big(\mathscr{P}_d(S)\big) 
\subseteq \bigcap\limits_{k\ge \lceil d/2\rceil}
\big(\qmod{g}_{k}\cap \re[\xi]_d\big)
\subseteq \mathscr{P}_d(S),
\]
where $\mbox{int}(\cdot)$ denotes the interior of the set.
For $k\ge \max\{\lceil d/2\rceil, \lceil \deg(g)/2\rceil\}$, 
we define the $k$-th order SOS approximation of \eqref{eq:DRO_equiv} 
as 
\begin{equation}\label{eq:DRO_SOS_k}
	\left\{\begin{array}{cl}
		\min\limits_{(t,x,w)} & t\\
		\st & t\in \re,\, x\in\re^n,\, 
		w^{\top}[\xi]_d \in \qmod{g}_{k} + \mc{B}^*,\\
		& x\in X = \{x\in\re^n: e^{\top}x = 1,\,x\ge 0\},\\
		& (a_je_1\cdot t+A_jx+c_j-w)^{\top}[\xi]_d\in \qmod{g}_{k},\,\,
		\forall j\in [m].
	\end{array}
	\right.
\end{equation}
In the above, the constraint $w^{\top}[\xi]_d\in \qmod{g}_{k}+\mc{B}^*$
describes the membership of the polynomial $w^{\top}[\xi]_d$ in $\xi$,
belongs to the cone $\qmod{g}_{k}+\mc{B}^*\subseteq \re[\xi]_{k}$.
Notice that $\mathscr{S}[g]_{k}$ defined in \eqref{mom:S(g):2d}
is the dual cone of $\qmod{g}_k$.
Thus, the dual problem of \eqref{eq:DRO_SOS_k} is 
\begin{equation}\label{eq:DRO_mom_k}		
	\left\{\begin{array}{cl}
		\max\limits_{\substack{(\gamma_0,\gamma,y_1,\ldots, y_m,\\
				z_1,\ldots, z_m)}}& 
		\gamma_0-\sum\limits_{j=1}^m c_j^{\top}y_j \\
		\st & \gamma_0\in\re,\, \gamma\in\re^n,\, \gamma\ge 0,\\
		& \sum\limits_{j=1}^m a_j(e_1^{\top}y_j) = 1,\,
		\gamma_0e+\gamma+\sum\limits_{j=1}^m A_j^{\top}y_j = 0,\\
		& y_j = z_j|_d,\, z_j\in \mathscr{S}[g]_{k},\, \forall j\in[m],\\
		& \sum\limits_{j=1}^m y_j\in \operatorname{cone}(\mc{B}),\,
		\sum\limits_{j=1}^m z_j\in \mathscr{S}[g]_k,
	\end{array}\right.
\end{equation}	
where $z|_d \coloneqq (z_{\alpha})_{|\alpha|\le d}$ denotes 
the truncation of $z$ up to degree $d$.
The problem \eqref{eq:DRO_mom_k} is called the $k$-th order 
moment relaxation of \eqref{eq:DRO_dual}.
\begin{theorem}	\label{key1}
	Under Assumption~\ref{ass:closed},
	suppose $(\gamma_0^{(k)},\gamma^{(k)},y_1^{(k)},\cdots, y_m^{(k)},z_1^{(k)},\ldots, z_m^{(k)})$
	is an optimizer of \eqref{eq:DRO_mom_k}.
	Then $(\gamma_0^{(k)}, \gamma^{(k)},y_1^{(k)},\ldots, y_m^{(k)})$ 
	is a maximizer of \eqref{eq:DRO_dual}
	if and only if each $y_j^{(k)}$ belongs to $\mathscr{R}_d(S)$.
\end{theorem}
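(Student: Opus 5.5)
The plan is to compare \eqref{eq:DRO_mom_k} with \eqref{eq:DRO_dual} directly. The key fact is that the moment relaxation is an outer approximation of \eqref{eq:DRO_dual}, so its optimal value is an upper bound for the optimal value of \eqref{eq:DRO_dual}.

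\emph{Step 1: the relaxation is an outer approximation.} Take any feasible point $(\gamma_0,\gamma,y_1,\ldots,y_m)$ of \eqref{eq:DRO_dual}. Under Assumption~\ref{ass:closed}, $S$ is compact, so $\overline{\mathscr{R}_d(S)}=\mathscr{R}_d(S)$. Hence each $y_j$ admits a finitely atomic measure $\mu_j$ on $S$ (\cite[Theorem~2.7.1]{Niebook}).
\begin{itemize}
\item Define $z_j\coloneqq \int [\xi]_{2k}\,\mathtt{d}\mu_j$. Then $z_j|_d=y_j$.
\item Since the $\mu_j$ are supported on $S=\{g\ge 0\}$, we have $z_j\in\mathscr{S}[g]_k$ and $\sum_j z_j\in\mathscr{S}[g]_k$.
\item Since $\sum_j y_j\in \overline{K}\subseteq \operatorname{cone}(\mc{B})$, using closedness of $\cone{\mc{B}}$, we get $\sum_j y_j\in\operatorname{cone}(\mc{B})$.
\end{itemize}
So every feasible point of \eqref{eq:DRO_dual} extends to a feasible point of \eqref{eq:DRO_mom_k} with the same objective value. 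It follows that the optimal value $\vartheta_k$ of \eqref{eq:DRO_mom_k} satisfies $\vartheta_k\ge\vartheta^*$, where $\vartheta^*$ is the optimal value of \eqref{eq:DRO_dual}.

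\emph{Step 2: necessity.} If $(\gamma_0^{(k)},\gamma^{(k)},y^{(k)}_1,\ldots,y^{(k)}_m)$ is a maximizer of \eqref{eq:DRO_dual}, it is in particular feasible for \eqref{eq:DRO_dual}. Then $y_j^{(k)}\in\overline{\mathscr{R}_d(S)}=\mathscr{R}_d(S)$ for each $j$, using compactness of $S$ as in Step 1.

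\emph{Step 3: sufficiency.} Suppose each $y_j^{(k)}\in\mathscr{R}_d(S)$. Then $\sum_j y_j^{(k)}$ lies in $\mathscr{R}_d(S)$, since this is a convex cone, and in $\operatorname{cone}(\mc{B})$ by feasibility for \eqref{eq:DRO_mom_k}. Hence $\sum_j y_j^{(k)}\in K\subseteq\overline{K}$. The linear constraints and $\gamma^{(k)}\ge 0$ are shared by the two problems. Therefore $(\gamma_0^{(k)},\gamma^{(k)},y_1^{(k)},\ldots,y_m^{(k)})$ is feasible for \eqref{eq:DRO_dual}, and its objective value is $\vartheta_k$. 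By Step 1, $\vartheta_k\ge\vartheta^*$, and since the point is feasible for \eqref{eq:DRO_dual}, $\vartheta_k\le\vartheta^*$. Hence the point attains $\vartheta^*$ and is a maximizer.

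\emph{Main obstacle.} The delicate point is the cone bookkeeping in Steps 1 and 3, specifically matching $\overline{K}$ with $\mathscr{R}_d(S)\cap\operatorname{cone}(\mc{B})$. This needs two facts:
\begin{itemize}
\item $\mathscr{R}_d(S)$ is closed when $S$ is compact, so that $K=\mathscr{R}_d(S)\cap\cone{\mc{B}}$ is an intersection of closed cones and hence closed;
\item $\cone{\mc{B}}$ is closed, by Assumption~\ref{ass:closed}.
\end{itemize}
Together these give $K=\overline{K}$, and this is what makes the feasible sets correspond exactly.

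Note also that Step 1 only uses that representing measures of the $y_j$ exist on $S$. Their higher moments then automatically make the corresponding moment and localizing matrices psd, so no flatness or extension condition is needed.
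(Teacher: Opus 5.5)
Your proof is correct and follows the paper's route. Necessity comes from feasibility in \eqref{eq:DRO_dual} together with closedness of $\mathscr{R}_d(S)$ for compact $S$, and sufficiency comes from showing the point is feasible for \eqref{eq:DRO_dual} and using that \eqref{eq:DRO_mom_k} is a relaxation. Your Step~1 proves explicitly the relaxation inequality $\vartheta_k\ge\vartheta^*$ (via moment extensions of representing measures and closedness of $\cone{\mc{B}}$), which the paper only uses implicitly when it says the relaxation is tight.
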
	
\begin{proof}	
	Under Assumption~\ref{ass:closed}, $\mathscr{R}_d(S)$ is closed.
	If  $(\gamma_0^{(k)}, \gamma^{(k)},y_1^{(k)},\ldots, y_m^{(k)})$ 
	is a maximizer of \eqref{eq:DRO_dual}, 
	then each $y_j^{(k)} \in \overline{K}\subseteq \mathscr{R}_d(S)$. 
	Conversely, if each $y_j^{(k)}$ belongs to $\mathscr{R}_d(S)$,  
	then $\sum_{j=1}^m y_j^{(k)}\in  \mathscr{R}_d(S)$ since 
	$\mathscr{R}_d(S)$ is a convex cone.
	This implies $(\gamma_0^{(k)}, \gamma^{(k)},y_1^{(k)},\ldots, y_m^{(k)})$ 
	is feasible for \eqref{eq:DRO_dual}.
	So the relaxation \eqref{eq:DRO_mom_k} is tight, $(\gamma_0^{(k)}, \gamma^{(k)},y_1^{(k)},\ldots, y_m^{(k)})$ 
	is a maximizer of \eqref{eq:DRO_dual}.
\end{proof}

\section{A semidefinite algorithm}
\label{sc:momsos}

In this section, we propose a semidefinite algorithm to solve 
the distributionally robust shortfall risk problem \eqref{eq:DRO shortfall risk}
.
Suppose Assumption~\ref{ass:closed} holds. Denote
\[
t_0\coloneqq \lceil d/2\rceil,\quad t_c \coloneqq \lceil \deg(g)/2\rceil.
\]
\begin{alg}\label{alg}
Let $k \coloneqq \max\{t_0,t_c\}$,
$l_j \coloneqq k+1\,(\forall j\in [m])$,
and choose a generic polynomial $R\in\Sigma[\xi]_{2t_0+2}$.
Then do the following.
\begin{description}
	
	\item[Step~1]
	Solve the dual pair \eqref{eq:DRO_SOS_k}--\eqref{eq:DRO_mom_k} 
	at the relaxation order $k$
	to get minimizers $(t^{(k)},x^{(k)},w^{(k)})$ and $(\gamma_0^{(k)},\gamma^{(k)},y_1^{(k)}, \ldots, y_m^{(k)}, z_1^{(k)},\ldots, z_m^{(k)})$ respectively.
	
	\item[Step~2]
	For each $j\in[m]$, if $y_j^{(k)}\not=0$, solve the moment optimization problem
	\begin{equation}\label{eq:ATMP}
		\left\{\begin{array}{cl}
			\min\limits_{v_j} & \langle R, v_j\rangle \\
			\st& v_j|_{d} = y^{(k)}_j, \,
			v_j \in \mathscr{S}[g]_{l_j}.
		\end{array}\right.
	\end{equation}
	If the problem \eqref{eq:ATMP} is infeasible for some $j\in [m]$,
	then update $k \coloneqq k+1$ and go back to Step 1. 
	Otherwise, solve for a minimizer $v_j^{(k)}$.
	Then go to the next step.
	
	\item[Step~3]
	For each computed $v_j^{(k)}$, check whether there is an integer 
	$s_j \in\left[\max \{t_0,t_c\}, l_j\right]$ that makes
	\begin{equation}\label{eq:flat_trun}
		\operatorname{rank} M_{s_j-t_c}\big[v_j^{(k)}\big]
		= \operatorname{rank} M_{s_j}\big[v_j^{(k)}\big].
	\end{equation}
	If any $s_j$ does not exist, update $l_j \coloneqq l_j+1$, 
	return Step~2 for the specific $j$. 
	
	\item[Step~4]
	Let $y^*\coloneqq \sum_{j=1}^m y_j^{(k)}$. 
	Find the unique finitely atomic measure $\mu^*$ supported on $S$
	that is admitted by $y^*/(y_0^*)$.
	Return $(t^*,x^*,w^*) \coloneqq (t^{(k)}, x^{(k)}, w^{(k)})$,
	$\gamma_0^*\coloneqq \gamma^{(k)}$, $\gamma^*\coloneqq \gamma^{(k)}$
	and $y_j^* \coloneqq y_j^{(k)}$ for each $j\in[m]$.
\end{description}
\end{alg}
We would like to make some remarks for the algorithm.

In Step~1, we may assume the semidefinite programs 
\eqref{eq:DRO_SOS_k}--\eqref{eq:DRO_mom_k} are solvable without loss of generality, 
up to a proper selection of $\mc{B}$.

The Step~2 and Step~3 are used to verify if the computed optimizers
$y_j^{(k)}$ belongs to $\mathscr{R}_d(S)$ or not.
If $y_j^{(k)} = 0$, then it admits the zero measure.
Otherwise, we solve \eqref{eq:ATMP} to find a tms extension $v_j^{(k)}$ of $y_j^{(k)}$.
The rank condition \eqref{eq:flat_trun} is called {\it flat truncation} condition
(see \cite{Nieflat13}), 
where $M_{s_j-t_c}[v_j^{(k)}]$ is the moment matrix introduced in 
\eqref{eq:mommat}.
This is a sufficient condition for $v_j^{(k)}|_{2s_j}\in\mathscr{R}_{2s_j}(S)$. 
Since $2s_j\ge d$ and $y_j^{(k)} = v_j^{(k)}|_d$, 
when the flat truncation condition \eqref{eq:flat_trun} holds, 
we must have $y_j^{(k)}$ admits a measure supported on $S$.
For the special case that $m=1$, the loss function $l(Z)$ becomes
a single polynomial. 
If $y_1^{(k)}\in \mathscr{R}_d(S)$, then it must correspond to a worst-case
measure. Such kinds of polynomial distributionally robust optimization problems
have been discussed in \cite{NieYang23,NieZhong23}.
However, if $m>1$, then we need to verify if $y_j^{(k)}\in \mathscr{R}_d(S)$
for each $j\in[m]$ to ensure \eqref{eq:DRO_mom_k} is a tight relaxation for the original 
DRO shortfall risk problem.

If $y_j^{(k)}\in \mathscr{R}_d(S)$ for all $j\in[m]$,
then $y^* = \sum_{j=1}^m y_j^{(k)}$ also belongs to $\mathscr{R}_d(S)$.
Thus, the tms $y^*/(y_0^*)$ admits a unique finitely atomic measure 
$\mu^*$ supported on $S$ \cite[Theorem~2.7.1]{Niebook}.
In other words, there are $r$ distinct points $u_1,\ldots, u_r\in S$ 
and positive scalars $\theta_1,\ldots, \theta_r\in\re$
such that 
\[
\mu^* = \sum\limits_{i=1}^m \theta_i\delta_{u_i}
\quad\mbox{and}\quad
y_{\alpha}^* = y_0^* \int_S \xi^{\alpha}{\tt d}\mu^*\,
(\forall \alpha\in \N_d^p),
\]
where $\delta_{u_i}$ denotes the Dirac measure supported at $u_i$.
By Theorem~\ref{thm:equiv}, $\mu^*$ is a worst-case measure, 
which can be computed via Schur decomposition. 
The computation can be implemented using the {\tt MATLAB} 
software package {\tt GloptiPoly3} \cite{HenrionLasserre2009}. 
For further details on this approach, we refer the reader to \cite{NieAtrunc}.

\begin{theorem}\label{thm:finite}
	Under Assumption~\ref{ass:closed}, suppose Algorithm~\ref{alg} 
	terminates in finite loops with optimizers 
	$(t^*, x^*,w^*)$ and $(\gamma_0^*, \gamma^*, y_1^*,\ldots, y_j^*)$.
	If $t^* = \gamma_0^*-\sum_{j=1}^m c_j^{\top}y_j^*$,
	then $(t^*, x^*)$ is an optimizer of the original 
	DRO problem \eqref{eq:SC_DRO shortfall risk}.
\end{theorem}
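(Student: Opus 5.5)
The plan is a sandwich argument. The relaxation \eqref{eq:DRO_SOS_k} gives a feasible point of the original problem, and the terminated moment relaxation gives a feasible point of the exact dual \eqref{eq:DRO_dual}. Weak duality then closes the gap.

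\emph{Primal feasibility.} Let $(t^*,x^*,w^*)$ be the minimizer of \eqref{eq:DRO_SOS_k} returned at the final order $k$. Since $\qmod{g}_k\subseteq\mathscr{P}(S)$, we have $\qmod{g}_k\cap\re[\xi]_d\subseteq\mathscr{P}_d(S)$. Hence each $(a_je_1 t^*+A_jx^*+c_j-w^*)^{\top}[\xi]_d$ lies in $\mathscr{P}_d(S)$. Likewise $(w^*)^{\top}[\xi]_d\in\qmod{g}_k+\mc{B}^*\subseteq\mathscr{P}_d(S)+\mc{B}^*=K^*$, using Assumption~\ref{ass:closed}. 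So $(t^*,x^*,w^*)$ is feasible for \eqref{eq:DRO_equiv}. By the equivalence established before Theorem~\ref{thm:equiv} (Lemma~\ref{lem:equiv_cons} together with the conic duality argument around \eqref{eq:theta_min}), $(t^*,x^*)$ is feasible for \eqref{eq:SC_DRO shortfall risk}. Writing $t_{\min}$ for the optimal value of \eqref{eq:DRO_equiv}, this gives $t^*\ge t_{\min}$.

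\emph{Dual feasibility.} Algorithm~\ref{alg} terminated, so every $y_j^*$ either vanishes or has an extension $v_j$ satisfying the flat truncation \eqref{eq:flat_trun}. In either case $y_j^*\in\mathscr{R}_d(S)$, as remarked after the algorithm via the flat extension theorem. Then $\sum_j y_j^*\in\mathscr{R}_d(S)$, and also $\sum_j y_j^*\in\operatorname{cone}(\mc{B})$ from the constraints of \eqref{eq:DRO_mom_k}. So $\sum_j y_j^*\in K=\overline{K}$, using $\cone{\mc{B}}$ closed and $\mathscr{R}_d(S)$ closed because $S$ is compact. The linear equality constraints carry over directly. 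Hence $(\gamma_0^*,\gamma^*,y_1^*,\ldots,y_m^*)$ is feasible for \eqref{eq:DRO_dual}; this is the content of Theorem~\ref{key1}.

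\emph{Weak duality and conclusion.} Take any feasible $(t,x,w)$ of \eqref{eq:DRO_equiv} and the dual feasible point above. Repeat the computation from the proof of Theorem~\ref{thm:equiv}: substitute the equality constraints and use $e^{\top}x=1$. This gives
\[
t-\Big(\gamma_0^*-\sum_j c_j^{\top}y_j^*\Big)=(\gamma^*)^{\top}x+\sum_j\big\langle (a_je_1t+A_jx+c_j-w)^{\top}[\xi]_d,y_j^*\big\rangle+\Big\langle w^{\top}[\xi]_d,\sum_j y_j^*\Big\rangle\ge 0.
\]
Each term is nonnegative by the dual pairings $\mathscr{P}_d(S)=\mathscr{R}_d(S)^*$ and $K^*$ versus $K$. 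So every feasible $t$ satisfies $t\ge\gamma_0^*-\sum_j c_j^{\top}y_j^*=t^*$. Combined with the feasibility of $(t^*,x^*)$, this shows $t^*=t_{\min}$ and $(t^*,x^*)$ is optimal for \eqref{eq:SC_DRO shortfall risk}.

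The main point needing care is the dual feasibility step. It must be justified that the flat truncation really places $y_j^*$ in $\mathscr{R}_d(S)$ and that the sum lies in the closed cone $K$. Once that holds, the rest is routine weak duality.
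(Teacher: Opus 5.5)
Your proposal is correct and follows essentially the same sandwich argument as the paper: $(t^*,x^*,w^*)$ is feasible for \eqref{eq:DRO_equiv} because $\qmod{g}_k\cap\re[\xi]_d\subseteq\mathscr{P}_d(S)$, the terminated moment solution is feasible for \eqref{eq:DRO_dual} via flat truncation and Theorem~\ref{key1}, and weak duality then gives $t^*\ge t_{\min}\ge\gamma_0^*-\sum_j c_j^{\top}y_j^*=t^*$. Your version is slightly more explicit and economical: you check the constraint on $w^*$ using $K^*=\mathscr{P}_d(S)+\mc{B}^*$ and write out the weak-duality identity, and you need only dual feasibility rather than the maximality the paper extracts from Theorem~\ref{key1}.
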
	
\begin{proof}		
	Let $f_1, f_2$ be optimal values of the problems \eqref{eq:DRO_equiv} 
	and \eqref{eq:DRO_dual} respectively. 
	Then $f_1\ge f_2$ by the weak duality.
	The termination criteria of Algorithm~\ref{alg} ensures 
	$y_j^*\in \mathscr{R}_d(S)$ for each $j\in[m]$.
	By Theorem~\ref{key1}, $(\gamma_0^*, \gamma^*, y_1^*,\ldots, y_m^*)$ 
	is a maximizer of \eqref{eq:DRO_dual}. 
	Hence we have $f_2 = \gamma_0^*-\sum_{j=1}^m c_j^{\top}y_j^*$.
	Since $\qmod{g}_{k}\cap \re[\xi]_d\subseteq \mathscr{P}_d(S)$, 
	the optimizer $(t^*,x^*, w^*)$ computed by Algorithm~\ref{alg}
	is also feasible for \eqref{eq:DRO_equiv}. 
	If $t^* = \gamma_0^*-\sum_{j=1}^m c_j^{\top}y_j^*$, 
	then $t^*\ge f_1 \geq f_2 = t^*$, thus $t^* = f_1$.
	This implies that $\left(t^*, x^*, w^*\right)$ is a minimizer of \eqref{eq:DRO_equiv}. 
	By Theorem~\ref{thm:equiv}, $(t^*,x^*)$ is also an optimizer 
	of \eqref{eq:SC_DRO shortfall risk}.
\end{proof}

\begin{theorem}\label{theo:C2}
	Under Assumption~\ref{ass:closed}, 
	suppose $\qmod{g}$ is archimedean and there exists a feasible point 
	$(\hat{t}, \hat{x}, \hat{w})$ of (\ref{eq:DRO_SOS_k}) such that 
	\begin{equation}\label{eq:q1q2}
		\hat{w}^{\top}[\xi]_d = q_1(\xi)+q_2(\xi)
		\quad\mbox{with}\quad
		q_1(\xi)>0\,\,(\forall \xi\in S),\,\,q_2(\xi) \in \mc{B}^*.
	\end{equation}
	Suppose $(\gamma_0^{(k)}, \gamma^{(k)},y_1^{(k)}, \ldots, y_m^{(k)}, z_1^{(k)},\ldots, z_m^{(k)})$ 
	is an optimizer of the $k$-th order moment relaxation \eqref{eq:DRO_mom_k}. 
	Then $\{y_j^{(k)}\}_{k=1}^{\infty}$ is bounded for each $j$ and 
	every accumulation point of 
	$\{(\gamma_0^{(k)}, \gamma^{(k)},y_1^{(k)}, \ldots, y_m^{(k)})\}_{k=1}^{\infty}$ 
	is a maximizer of \eqref{eq:DRO_dual}.
\end{theorem}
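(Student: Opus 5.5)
Our plan is the standard asymptotic-convergence argument for Moment-SOS hierarchies, in two parts. First, we use the interior-type point $(\hat t,\hat x,\hat w)$ to bound the moment sequences $y_j^{(k)}$ uniformly in $k$. Second, we show that any limit point is feasible and optimal for \eqref{eq:DRO_dual}.

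\textbf{Boundedness.} Since $q_1>0$ on the compact set $S$, there is $\epsilon>0$ with $q_1-\epsilon>0$ on $S$. Archimedeanity and Putinar's Positivstellensatz then give $q_1-\epsilon\in\qmod{g}_{k_0}$ for some $k_0$; we only consider $k\ge k_0$. Write $y^{(k)}=\sum_j y_j^{(k)}$ and $z^{(k)}=\sum_j z_j^{(k)}\in\mathscr{S}[g]_k$. Because $y^{(k)}\in\operatorname{cone}(\mc{B})$ and $q_2\in\mc{B}^*$, we get $\langle q_2,y^{(k)}\rangle\ge 0$. Moreover, $\langle q_1-\epsilon,z^{(k)}\rangle\ge0$ by the duality \eqref{eq:qmod_dual}. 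Hence
\[
\langle \hat w^{\top}[\xi]_d, y^{(k)}\rangle\ \ge\ \epsilon\,(y^{(k)})_0 .
\]
Next, feasibility of $(\hat t,\hat x,\hat w)$ for \eqref{eq:DRO_SOS_k} and of the moment point for \eqref{eq:DRO_mom_k} give the weak-duality identity
\[
\hat t-\Big(\gamma_0^{(k)}-\sum_j c_j^{\top}y_j^{(k)}\Big)=\sum_j\langle (a_je_1\hat t+A_j\hat x+c_j-\hat w)^{\top}[\xi]_d,z_j^{(k)}\rangle+\langle \hat w^{\top}[\xi]_d,y^{(k)}\rangle+\gamma^{(k)\top}\hat x .
\]
This comes from the same manipulation as in the proof of Theorem~\ref{thm:equiv}. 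All three terms on the right are nonnegative. The moment values $\gamma_0^{(k)}-\sum_jc_j^{\top}y_j^{(k)}$ are nonincreasing in $k$, since the relaxations tighten, and they are bounded below by the optimal value of \eqref{eq:DRO_dual}, which we assume finite. So the left side is bounded, say by $C$, and therefore $\epsilon (y^{(k)})_0\le C$. Each $z_j^{(k)}\in\mathscr{S}[g]_k$ has $(z_j^{(k)})_0\ge0$, so every $(y_j^{(k)})_0$ is bounded.

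The step I expect to be the main obstacle is passing from bounded zeroth moments to bounded full truncations $y_j^{(k)}=z_j^{(k)}|_d$. For this we use archimedeanity again. There is $N$ with $N-\|\xi\|^2\in\qmod{g}_{k_1}$, and together with $M_k[z_j^{(k)}]\succeq0$ this gives the standard bound $|(z_j^{(k)})_\alpha|\le N^{|\alpha|/2}(z_j^{(k)})_0$ for $|\alpha|\le d$, uniformly in $k$ (as in Nie's book / Lasserre's convergence proofs). The multipliers are then handled through the equality constraints: $\gamma_0^{(k)}e+\gamma^{(k)}=-\sum_jA_j^{\top}y_j^{(k)}$ is bounded. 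Since $\gamma^{(k)}\ge0$, this bounds $\gamma_0^{(k)}$ from above. A lower bound on $\gamma_0^{(k)}$ follows from the objective lower bound, and then $\gamma^{(k)}$ is bounded as well.

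\textbf{Limit points.} Let $(\gamma_0^*,\gamma^*,y_1^*,\dots,y_m^*)$ be an accumulation point along a subsequence. The linear constraints and $\gamma^*\ge0$ pass to the limit, and $\sum_jy_j^*\in\operatorname{cone}(\mc{B})$ because that cone is closed by Assumption~\ref{ass:closed}. To see $y_j^*\in\mathscr{R}_d(S)$, take any $p\in\mathscr{P}_d(S)$ and $\delta>0$. By Putinar, $p+\delta\in\qmod{g}_k$ for large $k$, so
\[
\langle p,y_j^{(k)}\rangle\ge-\delta (y_j^{(k)})_0 .
\]
Letting $k\to\infty$ and then $\delta\to0$ gives $\langle p,y_j^*\rangle\ge0$. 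Hence $y_j^*\in\mathscr{P}_d(S)^*=\overline{\mathscr{R}_d(S)}=\mathscr{R}_d(S)$, using \eqref{eq:PDdual} and compactness of $S$. The same argument applied to $\sum_j y_j^*$ shows it lies in $\mathscr{R}_d(S)\cap\operatorname{cone}(\mc{B})=K=\overline{K}$. Thus the limit is feasible for \eqref{eq:DRO_dual}.

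Its objective value is the limit of the relaxation values. Each relaxation value is at least the optimal value of \eqref{eq:DRO_dual}, because \eqref{eq:DRO_mom_k} relaxes \eqref{eq:DRO_dual}: any feasible point of \eqref{eq:DRO_dual} extends to one of \eqref{eq:DRO_mom_k} by taking moments of representing measures. So the limit objective is at least the optimum, while feasibility gives at most the optimum. Hence the accumulation point is a maximizer of \eqref{eq:DRO_dual}.
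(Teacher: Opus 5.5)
Your proposal is correct and follows the paper's proof: the bound $\epsilon\sum_j (y_j^{(k)})_0\le \hat t-f_2$ is exactly the paper's argument, obtained from feasibility of $(\hat t,\hat x,\hat w)$, from $\langle q_2,\bar y^{(k)}\rangle\ge 0$ and from $q_1-\epsilon\in\qmod{g}_{k_0}$, and you share its tacit assumption that the optimal value $f_2$ of \eqref{eq:DRO_dual} is finite. The remaining steps are what the paper simply cites from \cite[Theorem~4.7]{NieYang23}, namely bounding all moments of degree at most $d$ via $N-\|\xi\|^2\in\qmod{g}$, bounding the multipliers, and showing that accumulation points are feasible and optimal, and you have carried them out correctly.
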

\begin{proof}
	Since $(\hat{t}, \hat{x}, \hat{w})$ is feasible for (\ref{eq:DRO_SOS_k}),
	we have
	\[
	(a_je_1\cdot \hat{t}+A_j\hat{x}+c_j-\hat{w})^{\top}[\xi]_d \in \operatorname{QM}[g]_{k},
	\quad \forall j\in[m].
	\]
	For every  $(\gamma_0,\gamma, y_1,\ldots, y_m, z_1,\ldots, z_m)$
	that is feasible to \eqref{eq:DRO_mom_k}, we have
	\begin{align}
		\label{eq:nnp_tms_nonn}
		(a_je_1\cdot \hat{t}+A_j\hat{x}+c_j-\hat{w})^{\top}y_j\,\ge\, 0,\quad \forall j\in[m];\\
		\label{eq:reform_diff} 
		\hat{t} - \gamma_0+\sum\limits_{j=1}^m c_j^{\top}y_j\,\ge\, 
		\sum\limits_{j=1}^m (a_je_1\cdot \hat{t}+A_j\hat{x}+c_j)^{\top}y_j. 
	\end{align}
	The inequality \eqref{eq:nnp_tms_nonn} follows the dual relation \eqref{eq:qmod_dual},
	and the second inequality \eqref{eq:reform_diff} is obtained by introducing an intermediate term 
	$\sum_{j=1}^m (a_je_1\cdot t+A_jx)^{\top}y_j$. 
	For convenience, denote $\bar{y} = \sum_{j=1}^m y_j$. 
	The decomposition \eqref{eq:q1q2} implies
	\[
	\sum\limits_{j=1}^m (a_je_1\cdot \hat{t}+A_j\hat{x}+c_j)^{\top}y_j
	\,\ge\, \sum\limits_{j=1}^m \hat{w}^{\top}y_j 
	\,=\, \langle q_1, \bar{y}\rangle
	+ \langle q_2, \bar{y}\rangle.
	\]
	Notice that $q_2\in \mc{B}^*$ by assumption.
	Since $\bar{y}\in\operatorname{cone}(\mc{B})$, 
	we have $\langle q_2,\bar{y}\rangle \ge 0$.
	Since $\qmod{g}$ is archimedean and the polynomial $q_1(\xi)$ is positive over $S$,
	for any small $\epsilon>0$, there is a positive integer $k_0$ such that 
	$q_1(\xi)-\epsilon\in \qmod{g}_{2k_0}$.
	Let $\bar{y}^{(k)} = \sum_{j=1}^m y_j^{(k)}$.
	For all $k\ge k_0$, it holds that
	\[
	\langle q_1, \bar{y}^{(k)} \rangle
	= \langle q_1-\epsilon\cdot 1, \bar{y}^{(k)}\rangle + 
	\epsilon \langle 1, \bar{y}^{(k)} \rangle 
	\geq \epsilon \bar{y}_0^{(k)}
	= \epsilon \sum_{j=1}^m y_{j,0}^{(k)}.
	\]
	In the above, $1$ stands for the scalar polynomial $q=1$ and
	$\bar{y}^{(k)}_0, y_{j,0}^{(k)}$ denote the first entry of $\bar{y}^{(k)}$ 
	and $y_j^{(k)}$respectively.
	Let $f_2$ be the optimal value of \eqref{eq:DRO_dual}.
	Since \eqref{eq:DRO_mom_k} is a relaxation of \eqref{eq:DRO_dual},
	by \eqref{eq:reform_diff} and the previous analysis, we get
	\[
	\hat{t}-f_2 \,\ge\, \hat{t}-\gamma_0^{(k)}+\sum_{j=1}^m c_j^{\top}y_j^{(k)}
	\,\ge\, \langle q_1, \bar{y}^{(k)}\rangle
	\,\ge\, \epsilon \sum\limits_{j=1}^m y_{j,0}^{(k)}.
	\]
	The feasibility of \eqref{eq:DRO_mom_k} ensures that $y_{j,0}^{(k)}\ge 0$.
	Thus, each $\{y_{j,0}^{(k)}\}_{k=1}^{\infty}$ is bounded, i.e.,
	\[
	0\le y_{j,0}^{(k)}\leq \frac{1}{\epsilon}\left(\hat{t}-f_2\right),
	\quad j=1,\ldots, m.
	\]
	Then, by \cite[Theorem~4.7]{NieYang23}, the sequence $\{y_j^{(k)}\}_{k=1}^{\infty}$ is bounded for each $j$ and 
	every accumulation point of 
	$\{(\gamma_0^{(k)}, \gamma^{(k)},y_1^{(k)}, \ldots, y_m^{(k)})\}_{k=1}^{\infty}$ 
	is a maximizer of \eqref{eq:DRO_dual}.
\end{proof}

\section{Numerical Experiments}
\label{sc:numerical}
In this section, we will use the Algorithm~\ref{alg} to run numerical experiments 
on distributionally robust shortfall risk portfolio problems.
The experiments were conducted on a laptop equipped with an Intel(R) Core(TM) i5-6500 CPU 
and 8.00 GB of RAM (7.87 GB available), using \texttt{MATLAB} with the 
\texttt{GloptiPoly3} \cite{HenrionLasserre2009} and \texttt{SeDuMi} \cite{Sturm1999} packages.
We construct the moment feasible set $\mc{B}$ from statistical
samples using the following method.
Suppose 
\[
T = \left\{\xi^{(1)}, \ldots, \xi^{(N)}\right\}
\] 
is a given sample set of $\xi$. 
We randomly generate subsets $T_1, \ldots, T_s \subseteq T$ such that 
each $T_i$ contains $\lceil N / 2\rceil$ samples. 
For a chosen degree $d$, build the vectors of moment bounds
$\bar{l} = (\bar{l}_{\alpha}),\bar{u} = (\bar{u}_{\alpha})$ 
for $\alpha\in\N_d^p$ with each
\begin{equation}\label{eq:lu}
\begin{aligned}
	\bar{l}_\alpha = \min\limits_{j\in[s]}
	\Bigg\{ \frac{1}{\left|T_j\right|} \sum_{\xi^{(i)} \in T_j}(\xi^{(i)})^\alpha, 
	\frac{1}{\left|T \backslash T_j\right|} \sum_{\xi^{(i)} \in T \backslash T_j} 
	(\xi^{(i)})^\alpha \Bigg\}, \\
	\bar{u}_\alpha = \max\limits_{j\in [s]}
	\Bigg\{
	\frac{1}{\left|T_j\right|} \sum_{\xi^{(i)} \in T_j} (\xi^{(i)})^\alpha, 
	\frac{1}{\left|T \backslash T_j\right|} \sum_{\xi^{(i)} \in T \backslash T_j} 
	(\xi^{(i)})^\alpha \Bigg\}.
\end{aligned}
\end{equation}
This implies the moment feasible set
\begin{equation}\label{Y}
\mc{B} = \left\{y \in \mathbb{R}^{\mathbb{N}_d^p}: 
y_0 = 1,\, \bar{l}_{\alpha} \leq y_{\alpha} \leq \bar{u}_{\alpha}\,
(\forall 0\not=\alpha\in \N_d^p) \right\}.
\end{equation}
It is easy to verify that 
\[
\cone{\mc{B}} = \{y\in \re^{\N_d^p}: y_0\ge 0,\, 
\bar{l}_{\alpha}\cdot y_0\le y_{\alpha}\le \bar{u}_{\alpha} \cdot y_0\,
(\forall 0\neq \alpha\in \N_d^p)\}.
\]
Assume that each \( \xi^{(i)}\) independently follows the distribution of \(\xi\).
By the convergence properties of sample average approximation, 
as the sample size $N$ increases,
the moment ambiguity set \( \mathcal{M} \) with \( \mc{B} \) constructed in \eqref{Y}
gives a better approximations of the true distribution of $\xi$. 

We first present a synthetic example.
\begin{example}\label{ex:artificial}
Consider three stocks labeled as A, B, C in the stock market
with returns denoted by $r_1,r_2,r_3$ respectively.
Suppose $r = (r_1,r_2,r_3)^{\top}$ depends on a random vector $\xi=(\xi_1,\xi_2)^{\top}$
via polynomial functions, where $\xi_1,\xi_2$ are independently and normally
distributed as 
\[
\xi_1,\xi_2 \sim \mathcal{N}_{[0,1]}(0.5,0.2).
\]
In the above, $\mathcal{N}_{[a,b]}(c, d)$ stands for the truncated 
normal distribution restricted to $[a,b]$ with 
the mean value $c$ and the standard deviation $d$.
Consider that
\begin{equation}
	\label{eq:return}
	\begin{aligned}
		&	r_{1}(\xi)=0.6\xi_{1}-\xi_{2}+2\xi_{1}^{2}+\xi_{1}\xi_{2}-\xi_{2}^{2}+0.5 ,~\\
		&	r_{2}(\xi)=0.2\xi_{1}+3\xi_{2}-2\xi_{1}^{2}-2\xi_{1}\xi_{2}+\xi_{2}^{2}-0.02,~\\ 
		&	r_{3}(\xi)=0.3\xi_{1}-1.5\xi_{2}+\xi_{1}^{2}+\xi_{1}\xi_{2}+2\xi_{2}^{2}+0.3.
	\end{aligned}
\end{equation}
We choose the confidence level and the loss function
\[
\lambda = 0.6,\quad l(Z) = \sup\{0.1Z+1, Z+0.1\}.
\]
Let $x=(x_1, x_2, x_3)^{\top}$ denote the investment proportion of these three stocks. 
The distributionally robust shortfall risk portfolio problem 
\eqref{eq:SC_DRO shortfall risk} becomes	
\begin{equation}\label{eq:example11}
	\left\{\begin{array}{rl}
		\min \limits_{(t, x) } & t \\
		\st & \mathbb{E}_{\mu_1}[0.1(x^{\top }r(\xi)+t)-0.4] + 
		\mathbb{E}_{\mu_2}[x^{\top }r(\xi)+t+0.5] \geq 0 \\
		& \forall \mu_1,\mu_2\in \mc{M}([0,1]^2),\,\mu_1+\mu_2\in\mc{M},\\
		& x_1+x_2+x_3=1,~ x_1,x_2,x_3\geq0,~t\in \mathbb{R}.
	\end{array}\right.
\end{equation}
We testify the performance of this model and 
evaluate the efficiency of our algorithm across varying sample sizes.
Let $\mc{M}$ be a moment ambiguity set constructed with the moment 
feasible set $\mc{B}$ as in \eqref{Y}.
We consider the following cases: 
\[
s = 3,\quad d=n=2,\quad N\in \{100, 600, 1000\}.
\]
In computations, we use the \texttt{MATLAB} functions \texttt{makedist} and \texttt{truncate} 
to randomly generate $1000$ samples of $\xi$ and form a shared sample set for all 
three cases. For each case, the truncated moment bounds $\bar{l},\bar{u}$ as in \eqref{eq:lu}.

(i) When $N=100$, we obtain the moment bounds
\[\begin{aligned}
	&	\bar{l}=  (   0.4615,
	0.4883,
	0.2549,
	0.2299,
	0.2796)^{\top}, \\
	&	\bar{u}=  ( 0.5147,
	0.5384,
	0.3067,
	0.3034,
	0.3358)^{\top} .
\end{aligned}
\]
We ran Algorithm~\ref{alg} to solve \eqref{eq:example11}.
It terminated at the initial relaxation order $k=1$ with the computed optimal value and 
optimal solutions
\[
\begin{aligned}
	&	t^* \approx   3.2176,\quad x^*\approx (0.0000, 0.3333, 0.6667)^{\top},\\
	&		y_1^{*} \approx(10.0000,
	4.6150,
	4.8908,
	2.5544,
	2.6292,
	2.7960)^{\top},\\
	&	y_2^{*} \approx(0.0000,
	0.0000,
	0.0000,
	0.0000,
	0.0000,
	0.0000)^{\top}.
\end{aligned}
\]
The nonzero tms $y^{*} = y_1^*+y_2^*$, 
corresponds to the worst-case measure 
\[
\mu^* =  0.1442\delta_{u_1}+0.0220\delta_{u_2}+0.8338\delta_{u_3},
\]
where $u_1 = (0.0000, 0.0000)^{\top}$, $u_2 = ( 0.0000, 0.6385)^{\top}$
and $u_3 = (  0.5535,0.5697)^{\top}$.

(ii) When $N=600$, we obtain the moment bounds
\[\begin{aligned}
	&		\bar{l}=  (  0.4852,
	0.4894,
	0.2744,
	0.2463,
	0.2765)^{\top}, \\
	&	\bar{u}=  ( 0.5107,
	0.5193,
	0.2999,
	0.2638,
	0.3089)^{\top}.
\end{aligned}
\]
We ran Algorithm~\ref{alg} to solve \eqref{eq:example11}.
It terminated at the initial relaxation order $k=1$ with the computed optimal value and 
optimal solutions
\[
\begin{aligned}
	&	t^* \approx  3.1959,\quad x^*\approx (0.5769, 0.2692, 0.1538)^{\top},\\
	&	y_1^{*} \approx( 10.0000,
	4.8520,
	4.9322,
	2.7440,
	2.4630,
	2.8886
	)^{\top}\\
	&	y_2^{*} \approx(0.0000,
	0.0000,
	0.0000,
	0.0000,
	0.0000,
	0.0000)^{\top}.
\end{aligned}
\]
The nonzero tms $y^{*} = y_1^*+y_2^*$
corresponds to the worst-case measure 
\[
\mu =  0.1118\delta_{u_1}+0.7328\delta_{u_2}+ 0.1554\delta_{u_3},
\]
where $u_1 = ( 0.0950, 0.0000)^{\top}$, $u_2 = (0.6031,0.4955)^{\top}$
and $u_3 = ( 0.2099,0.8373)^{\top}$.

(iii) When $N=1000$, we obtain the moment bounds
\[\begin{aligned}
	&	\bar{l}= (  0.4852,
	0.4894,
	0.2744,
	0.2463,
	0.2765)^{\top}, \\
	& \bar{u}= ( 0.5107,
	0.5193,
	0.2999,
	0.2638,
	0.3089)^{\top} . 
\end{aligned}
\]
We ran Algorithm~\ref{alg} to solve \eqref{eq:example11}.
It terminated at the initial relaxation order $k=1$ with the computed optimal value and 
optimal solutions
\[
\begin{aligned}
	&	t^* \approx  3.1946,\quad x^*\approx (0.5556, 0.3333, 0.1111)^{\top},\\
	&	y_1^{*} \approx( 10.0000,
	4.8360,
	4.9640,
	2.7630,
	2.4889,
	2.8986)^{\top},\\
	&	y_2^{*} \approx(
	0.0000,
	0.0000,
	0.0000,
	0.0000,
	0.0000,
	0.0000)^{\top}.
\end{aligned}
\]
The nonzero tms $y^{*} = y_1^*+y_2^*$
corresponds to the worst-case measure 
\[
\mu = 0.1396\delta_{u_1}+0.2571\delta_{u_2}+0.6033\delta_{u_3},
\] 
where $u_1 = (0.2412, 0.0000)^{\top}$, $u_2 = (0.2233, 0.6742)^{\top}$
and $u_3 = (0.6506,0.5355)^{\top}$.

From these computational results, it can be easily observed that the optimal
value $t^*$ decreases as the sample size increases.
This indicates that the approximation accuracy of ambiguity set improves 
as the sampling size increases. 
To further illustrate the impact of sample sizes on computational performance, 
we conducted 30 independent experimental trials for each distinct sample size 
configuration.
The computational results of optimal values are plotted in Figure~\ref{FF}.
\begin{figure}[htb]
	\centering
	\includegraphics[width=8.5cm]{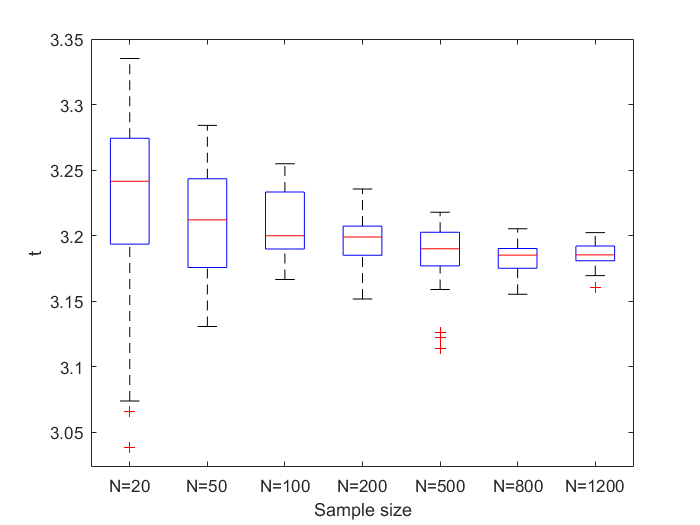}
	\caption{Trend of optimal values with
		increasing sample sizes of Example~\ref{ex:artificial}.}
	\label{FF}
\end{figure}
In the figure, the {\it box} represents the {\it interquartile range} 
(IQR, 25th to 75th percentiles), containing the middle $50\%$ of the data. 
It gradually narrows as the sample size increases, 
which indicates reduced data variability. 
The {\it horizontal line} within each box denotes the {\it median} (50th percentile), 
which progressively approaches a stable value with larger sample sizes. 
The {\it plus (+)} symbols mark {\it outliers} beyond 1.5×IQR, 
while the {\it whiskers} extend to the minimum and maximum non-outlier values. 
Collectively, the figure reveals a trend of stabilizing optimal values 
with increasing sample size.	
\end{example}

We then select five real-world stocks and choose four commonly used
influence factors as random variables. 
To describe the nonlinear relation between stock returns and these influencing factors,
we use polynomial fitting techniques to derive functions of returns based on historic data. 
The out-of-sample performance of the model is checked and analyzed subsequently.

\begin{example}\label{ex:real_stock1}
Consider five S$ \&$P 500 stocks from the American stock market: 
AAPL, MSFT, AMZN, C, JPM.
We collected their weekly historical returns from the AlphaVantage (\url{www.alphavantage.co})
from February 9, 2001 to October 26, 2007.
Consider four factors known to influence stock returns: 
Mkt-RF (Market Risk Factor), SMB (Small Minus Big), HML (High Minus Low) and
RMW (Robust Minus Weak). 
We collected historical data for the corresponding time period on the 
Kenneth French’s database (French, 2021).
Let $r=(r_1, r_2, r_3, r_4, r_5)^{\top}$ denote the 
vector of returns for these five stocks, 
and $\xi = (\xi_1, \xi_2, \xi_3, \xi_4)^{\top}$ the 
vectors of chosen influence factors.
By using the polynomial fitting based on the ordinary least squares (OLS) method, 
we build the following polynomial dependence of $r$ on $\xi$:

\[\begin{aligned}
	&\begin{array}{l}
		r_{1}(\xi) \approx 0.0141    +1.0008\xi_{1}   -0.0380\xi_{2}   -0.5000\xi_{3}   -0.2337\xi_{4}   -4.9797\xi_{1}^{2}\\
		\qquad +5.5816\xi_{1}\xi_{2}    +3.3308\xi_{1}\xi_{3}  -17.1760\xi_{1}\xi_{4}  -10.3220\xi_{2}^{2}   -0.4373\xi_{2}\xi_{3} \\
		\qquad-20.8000\xi_{2}\xi_{4}   +27.9150\xi_{3}^{2}  -23.6060\xi_{3}\xi_{4}  -31.1210\xi_{4}^{2},
	\end{array}\\				
	&\begin{array}{l}
		r_{2}(\xi) \approx    0.0025    +0.8397\xi_{1}   -0.2303\xi_{2}   -0.8592\xi_{3}   -0.3649\xi_{4}   -0.8987\xi_{1}^{2} \\
		\qquad -6.1472\xi_{1}\xi_{2}   -3.2933\xi_{1}\xi_{3}  -2.6637\xi_{1}\xi_{4}    +3.3105\xi_{2}^{2}  -13.1730\xi_{2}\xi_{3}  \\
		\qquad -1.7810\xi_{2}\xi_{4}   +33.2040\xi_{3}^{2}   -0.7877\xi_{3}\xi_{4}  -20.7270\xi_{4}^{2},
	\end{array}\\	
	&\begin{array}{l}
		r_{3}(\xi) \approx    0.0049    +0.8515\xi_{1}   -0.2664\xi_{2}   -1.2320\xi_{3}   -2.4132\xi_{4}   -4.3281\xi_{1}^{2} \\
		\qquad -1.6018\xi_{1}\xi_{2}    +8.6433\xi_{1}\xi_{3}   -25.8650\xi_{1}\xi_{4}   +14.8680\xi_{2}^{2}   -2.1372\xi_{2}\xi_{3} \\
		\qquad +51.6200\xi_{2}\xi_{4}   -2.2458\xi_{3}^{2}   +72.4570\xi_{3}\xi_{4}   +37.1640\xi_{4}^{2},
	\end{array}\\				
	&\begin{array}{l}
		r_{4}(\xi) \approx   -0.0001    +1.2593\xi_{1}   -0.4968\xi_{2}    +0.1437\xi_{3}   -0.1268\xi_{4}    +2.1782\xi_{1}^{2} \\
		\qquad  -8.1962\xi_{1}\xi_{2}    +7.3321\xi_{1}\xi_{3}   +3.3344\xi_{1}\xi_{4}    +1.9698\xi_{2}^{2}    +2.2514\xi_{2}\xi_{3} \\
		\qquad -14.7650\xi_{2}\xi_{4}  -13.3260\xi_{3}^{2}   +42.9830\xi_{3}\xi_{4}    +1.4268\xi_{4}^{2},
	\end{array}\\				
	&\begin{array}{l}
		r_{5}(\xi) \approx   -0.0007    +1.5127\xi_{1}   -0.4844\xi_{2}    +0.7546\xi_{3}   -0.5325\xi_{4}    +0.3397\xi_{1}^{2}  \\
		\qquad  -1.1218\xi_{1}\xi_{2}   -5.0506\xi_{1}\xi_{3}   -15.4610\xi_{1}\xi_{4}   +10.6180\xi_{2}^{2}  -15.5300\xi_{2}\xi_{3} \\
		\qquad +22.4820\xi_{2}\xi_{4}  -19.0280\xi_{3}^{2}   +10.8080\xi_{3}\xi_{4}    +1.9758\xi_{4}^{2}.
	\end{array}\\	
\end{aligned}
\]		
Suppose the random vector is supported on 
\[
S = \{\xi\in\re^4 : \left(\xi-\mu_M\right)^{\top} \Sigma_M^{-1}\left(\xi-\mu_M\right) \leq \bar{R}^2 \}, 
\]	
where  $\bar{R}$ is the ellipsoidal set radius, 
$\mu_M$ is the sample mean vector estimate of $\xi$ 
and $\Sigma_M$ is the sample covariance matrix estimate of $\xi$. 
Based on historic data, we set $\bar{R} = 3$ and compute
\[
\mu_M=\bbm 0.0006\\
0.0013\\
0.0012\\
0.0012\ebm,\quad
\Sigma_M= 10^{-3}\cdot\left[\begin{array}{rrrr}
	0.4605   & 0.0301  & -0.0621 &  -0.1135\\
	0.0301   & 0.1407    &0.0125   &-0.0335\\
	-0.0621   & 0.0125   & 0.0994   & 0.0237\\
	-0.1135   &-0.0335   & 0.0237 &   0.1561			
\end{array}\right].
\]	
For $s = 3, n=2, d=4$, we construct the moment feasible set $\mc{B}$ as in \eqref{Y} with
\[
\begin{aligned}
	& \begin{array}{l}\bar{l}= 10^{-3}\cdot\ (
		-1.4987,
		-1.0721,
		-0.7648,
		-0.7738,
		0.2471,
		-0.0236,
		-0.0999,\\
		\qquad \qquad \quad\,\,\,-0.2136,
		0.1000,
		-0.0123,
		-0.0683,
		0.0449,
		-0.0020,
		0.0209
		)^{\top},\end{array} \\
	& 	\begin{array}{l}\bar{u}= 10^{-3}\cdot 
		(3.4969, 
		2.3595,
		1.8434,
		2.6445,
		0.6731,
		0.0936,
		-0.0163,
		0.0095,\\
		\qquad \qquad \quad\,\,\, 0.1840,
		0.0384,
		-0.0058,
		0.1564,
		0.0444,
		0.2742
		)^{\top}.\end{array}
\end{aligned}
\]
Let $x=(x_1,x_2,x_3,x_4,x_5)^{\top}$ represent the proportion of investment in five stocks.
We choose the confidence level and the loss function
\[
\lambda = 1,\quad l(Z) = \sup\{Z-0.5, 0.5Z+2\}.
\] 
The distributionally robust shortfall risk portfolio problem 
\eqref{eq:SC_DRO shortfall risk} becomes
\begin{equation}\label{eq:example22}
	\left\{\begin{array}{rl}
		\min \limits_{(t, x)} & t \\
		\st & \mathbb{E}_{\mu_1}[x^{\top }r(\xi)+t+1.5]+\mathbb{E}_{\mu_2}[0.5(x^{\top }r(\xi)+t)-1] \geq 0\\
		& \forall \mu_1,\mu_2\in \mc{M}(S),\,\mu_1+\mu_2\in \mc{M},\\
		& x_1+x_2+x_3+x_4+x_5=1,~\\
		& x_1,x_2,x_3,x_4,x_5\geq0,~t\in \mathbb{R}.~
	\end{array}\right.
\end{equation}
We ran Algorithm~\ref{alg} to solve \eqref{eq:example22}.
It terminated at the initial relaxation order $k=1$
with the computed optimal value and the optimal solution
\[
\begin{aligned}
	& \,\, \,t^{*} \approx 2.0007,\quad
	x^{*} \approx(
	0.5902,
	0.0000,
	0.4098,
	0.0000,
	0.0000)^{\top},\\
	&\begin{array}{l}
		y_1^{*}\approx(
		0.0000,
		0.0000,
		0.0000,
		0.0000,
		0.0000,
		0.0000,
		0.0000,
		0.0000,
		0.0000,\\
		\quad\quad \,\,\,	0.0000,
		0.0000,
		0.0000,
		0.0000,
		0.0000,
		0.0000)^{\top}.\end{array}\\
	&  
	\begin{array}{l}
		y_2^*\approx 10^{-2}\cdot(
		200.0000,
		-0.2997,
		0.4719,
		0.3687,
		0.5289,
		0.1346,
		-0.0047,\\
		\quad\quad \,\,\, -0.0200,
		0.0019,
		0.0354,
		0.0077,
		-0.0137,
		0.0090,
		-0.0004,
		0.0548)^{\top}.
	\end{array}
\end{aligned}
\]	
The nonzero tms $y^*=y_1^*+y_2^*$
corresponds to the worst-case measure 
\[
\mu^* = \delta_{u},\quad
\mbox{where}\quad u = (   -0.0015,
0.0024,
0.0018,
0.0026)^{\top}.
\]		
\end{example}

To show the efficiency of our model involving polynomial dependence between 
stock returns and their influence factors, 
we compare it with the sample average approximation (SAA) model and 
the classic distributionally robust shortfall risk model:
\begin{equation}\label{eq:DROclassic}
\left\{\begin{array}{cl}
	\min\limits_{t\in \re} & t\\
	\st & \sup\limits_{\nu\in \mc{M'}}\mathbb{E}_{\nu}[l(x^{\top}r-t)]\le \lambda,\\
	& x \in X,
\end{array}
\right.
\end{equation}
where the stock return $r\in\re^n$ itself is considered as a random vector
following the distribution of some measure $\nu\in \mc{M}'$.

\begin{example}\label{ex:real_stock2}
Consider the same stocks along with their historic data in Example~\ref{ex:real_stock1}.
We compare our model \eqref{eq:DRO shortfall risk} with the SAA model and \eqref{eq:DROclassic}
via their out-of-sample performance in form of the cumulative return. 
Mathematically, the cumulative return $R_c$ of a portfolio of \(n\) stocks over 
\(\mathcal{T}\) selected time periods is computed as
\[
R_c = -1+\prod_{t = 1}^{\mathcal{T}}\Big(1 + \sum_{i = 1}^{n}x_i\cdot r_{i,t}\Big),
\]
where \(x_i\) is the investment proportion of the \(i\)-th stock and \(r_{i,t}\) 
is the return of the \(i\)-th stock on the \(t\)-th time period.

We carried out an out-of-sample evaluation using the rolling window method 
\cite{GuoXu19} over a period of $250$ weeks. 
This involves using the first $250$ data instances to infer the optimal portfolio strategy 
for the $251{\mbox{st}}$ week. 
Precisely, we consider the following DRO model without influence factors: 
\begin{equation}\label{eq:DRO_stock_nofac}
	\left\{\begin{array}{rl}
		\min\limits_{(t,x)} & t \\
		\st & 
		\mathbb{E}_{\nu_1}[x^{\top }r+t+1.5] +
		\mathbb{E}_{\nu_2}[0.5(x^{\top }r+t)-1] \geq 0\\
		& \forall \nu_1,\nu_2\in\mc{M}(S),\, \nu_1+\nu_2\in \mc{M}',\\
		& x_1+x_2+x_3+x_4+x_5=1,~\\
		& x_1,x_2,x_3,x_4,x_5\geq0,~t\in \mathbb{R}.~
	\end{array}\right.
\end{equation}
In the above, the ambiguity set $\mc{M'}$ is similarly constructed with $\mc{B}$ in \eqref{Y}. 
The SAA model with influence factors is written as 
\begin{equation}\label{eq:SAA}
	\left\{\begin{array}{rl}
		\min \limits _{(t,x)}& t \\
		\st &  \frac{1}{N}\sum\limits_{i = 1}^{N}[(x^{\top }r(\xi^{(i)})+t+1.5)+(0.5(x^{\top }r(\xi^{(i)})+t)-1)] \geq 0,~\\
		& x_1+x_2+x_3+x_4+x_5=1,~\\
		& x_1,x_2,x_3,x_4,x_5\geq0,~t\in \mathbb{R}.~
	\end{array}\right.
\end{equation}
where $N$ is the sample size.
In the experiment, we use the same polynomial relationship as in Example~\ref{ex:real_stock1} 
for all the time windows, while updating ambiguity sets with the change of samples.

The computational results are presented in Figure~\ref{fig:D1}.
\begin{figure}[htb]
	\centering
	\includegraphics[width=8.5cm]{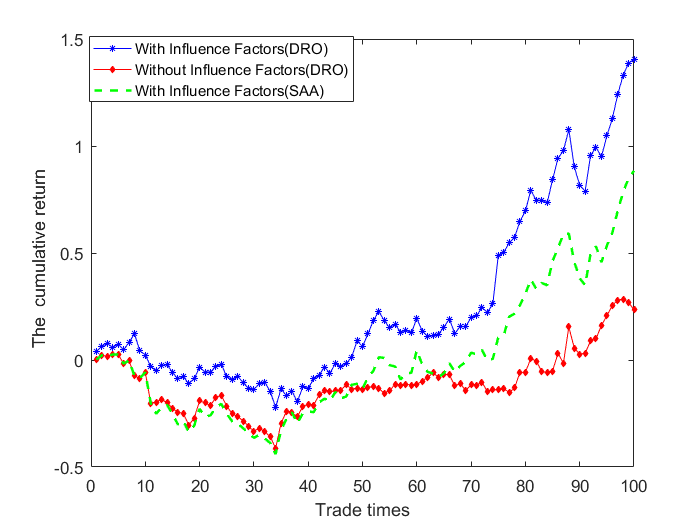}
	\caption{Comparison of out-of-sample performance for \eqref{eq:DRO shortfall risk}, \eqref{eq:SAA}
		and \eqref{eq:DROclassic} in Example~\ref{ex:real_stock2}.}
	\label{fig:D1}
\end{figure}
We use {\it asterisk} markers to plot the cumulative returns 
of the DRO model \eqref{eq:example22} with influence factors,
{\it dotted} line to plot the cumulative returns of the
SAA model \eqref{eq:SAA}, 
and use {\it diamond} markers to plot the cumulative returns of 
the classic DRO model \eqref{eq:DRO_stock_nofac}.
In Figure \ref{fig:D1}, the cumulative returns of these three models were 
close at the beginning. However, over time, the trajectory changed. 
While all three models suggest investment proportions 
maintain upward trends in cumulative returns,
the growth of other two baseline models was relatively moderate. 
In particular, during most of the observation periods, 
the cumulative rates of return for other two baseline models were consistently lower 
than that of our factor-inclusive DRO model. 
These results indicate that our model outperforms the long-term 
cumulative returns of the other two models.
\end{example}

\section{Conclusions}\label{sc:con}
In this paper, we consider a distributionally robust portfolio optimization 
problem based on shortfall risk and polynomial return functions.
Assume the ambiguity is given by moment information and the loss functions are
piecewise linear in the decision variables.
We show that this DRO model can be transformed into a dual pair of solvable linear 
conic optimization problems with moment and nonnegative polynomial cones.
We propose a semidefinite algorithm to solve such DRO shortfall risk problems. 
Under proper assumptions, it will converge to the optimal value and optimal solution
of the original problem.
In addition, our algorithm can recover the probability measure that satisfies the 
worst-case expectation constraint. 
Finally, the effectiveness of the proposed model and the efficiency of the 
algorithm are represented through numerical experiments.
An interesting direction for future work involves 
identifying specialized ambiguity set structures that 
guarantee finite convergence for our algorithm.

\end{document}